\newcommand{\free}[1]{\left\langle#1\right\rangle} 
\newcommand{\norm}{\operatorname{Norm}}
\newcommand{\disc}{\operatorname{disc}}
\newcommand{\tr}{\operatorname{tr}}
\newcommand{\Span}{\operatorname{Span}}
\newcommand{\End}{\operatorname{End}}
\newcommand{\BB}[1]{\mathbb{#1}} 
\newcommand{\script}[1]{\mathcal{#1}} 
\newcommand{\mfrak}[1]{\mathfrak{#1}} 
\newcommand{\CC}{\BB{C}}
\newcommand{\QQ}{\BB{Q}}
\newcommand{\ZZ}{\BB{Z}}
\newcommand{\FF}{\BB{F}}
\newcommand{\sO}{\script{O}}
\newcommand{\px}{P}
\newcommand{\po}{P_0}
\newcommand{\fl}{f_1}
\newcommand{\fll}{f_2}
\theoremstyle{plain}
\newtheorem{thm}{Theorem}
\newtheorem{lem}[thm]{Lemma}
\newtheorem{cor}[thm]{Corollary}
\theoremstyle{remark}
\newtheorem{rem}[thm]{Remark}
\theoremstyle{definition}
\newtheorem{defn}[thm]{Definition}
\newtheorem{ex}[thm]{Example}
\newtheorem{heuristic}[thm]{Heuristic}
\title{Super-Isolated Elliptic Curves and Abelian Surfaces in Cryptography}
\author{
	Travis Scholl
	\\ Department of Mathematics
	\\ University of Washington
	\\ \href{mailto:tscholl2@uw.edu}{tscholl2@uw.edu}
}
\begin{document}

\maketitle

\abstract{We call a simple abelian variety over $\FF_p$ \emph{super-isolated} if its ($\FF_p$-rational) isogeny class contains no other varieties. The motivation for considering these varieties comes from concerns about isogeny based attacks on the discrete log problem. We heuristically estimate that the number of super-isolated elliptic curves over $\FF_p$ with prime order and $p \leq N$, is roughly $\tilde{\Theta}(\sqrt{N})$. In contrast, we prove that there are only 2 super-isolated surfaces of cryptographic size and near-prime order.}

\section{Introduction}\label{sec:intro}

The security of elliptic curve cryptography depends on the difficulty of the elliptic curve discrete log problem (ECDLP). Given an elliptic curve $E$ over $\FF_p$, a cyclic subgroup of $E(\FF_p)$ generated by the point $P$, and a point $Q \in \free{P}$, the ECDLP asks to find an integer $k$ such that $Q = kP$. The fastest known generic algorithm to solve the ECDLP on an elliptic curve is Pollard's rho algorithm, which has an expected runtime of $\tilde{O}(\sqrt{p})$ \cite[Ch.~3.6.3]{menezes1997handbook}.

It is possible to transfer the ECDLP between curves via isogenies. If $\varphi:E \to E'$ is an isogeny\footnote{Unless otherwise noted, by isogeny we mean $\FF_p$-rational isogeny.} that restricts to an isomorphism $\free{P} \to \free{\varphi(P)}$, then $Q = kP$ if and only if $\varphi(Q) = k\varphi(P)$. This reduction is useful for solving the ECDLP if the time it takes to compute $\varphi(Q)$ and $\varphi(P)$, as well as to solve the ECDLP on $E'$, is less than the time it takes to solve the ECDLP on $E$.

Concern about isogeny based attacks is partially motivated by the Gaudry-Hess-Smart (GHS) attack \cite{gaudry2000algorithm}. Over certain extension fields\footnote{The attack described in \cite{menezes2006cryptographic} only applies to fields of the form $\FF_2^{3\ell}$ with $53 \leq \ell \leq 200$.}, Menezes and Teske in \cite[Sec.~7]{menezes2006cryptographic} used the generalized GHS attack to show that there is a non-negligible proportion of ``weak'' curves, for which the ECDLP can be solved in significantly less time than it takes Pollard's rho. Under some reasonable assumptions, given a random elliptic curve $E$ over such a field, one can find a chain of efficiently computable isogenies from $E$ to a weak curve.

In \cite[Sec.~11, Ex.~5]{koblitz2011elliptic}, Koblitz, Koblitz, and Menezes observed that it is possible to use the complex multiplication (CM) method to construct elliptic curves $E/\FF_p$ whose isogeny class is large ($\approx \sqrt{p}$), but contains no curves (besides $E$ itself) whose \emph{conductor gap} with $E$ is small. The conductor gap between two curves measures the computational complexity in computing an isogeny between them. The curve $E$ is called \emph{isolated} because there are no other curves $E'$ for which constructing an isogeny between $E$ and $E'$ is computationally feasible.

So far we have only mentioned elliptic curves, but the same ideas carry over to abelian surfaces.
In \cite{wenhan2012isolated}, Wang gave a construction for isolated abelian surfaces that is analogous to the one for curves given in \cite{koblitz2011elliptic}. Note that while these methods construct isolated varieties, they almost always have large isogeny classes.

In this paper, we focus on the special case of \emph{super-isolated} abelian varieties. We call an abelian variety over a finite field super-isolated if its isogeny class contains a single isomorphism class. For increased security and efficiency, we focus on varieties of prime or near-prime order defined over a prime field.

Our main contributions are as follows. First, we outline practical algorithms that search for super-isolated elliptic curves and abelian surfaces. Second, we prove that only two super-isolated surfaces of cryptographic size and near-prime order exist, see Examples~\ref{ex:iso-surface} and \ref{ex:iso-surface2}. Finally, we give some heuristics on the number of super-isolated varieties. Our results suggest that, unlike the case of surfaces, there are enough super-isolated elliptic curves of cryptographic size and prime order to use in cryptosystems that require ephemeral curves, such as \cite{miele2015efficient}.

The outline of the paper is as follows.
Section~\ref{sec:dim-1} focuses on elliptic curves. Some background and notation is given in Section~\ref{sec:dim-1-background}. In Section~\ref{sec:dim-1-alg}, we outline an algorithm to construct super-isolated elliptic curves of prime order over $\FF_p$ with $p$ of a given size. We heuristically estimate the number of such curves in Section~\ref{sec:count-super-isolated-curves}.
Section~\ref{sec:dim-2} focuses on surfaces. In Section~\ref{sec:dim-2-background}, we show that finding super-isolated surfaces reduces to finding \emph{super-isolated Weil numbers}, which are defined in that section. In Section~\ref{sec:dim-2-find-weil-numbers}, we outline an algorithm to search for super-isolated Weil numbers. We also prove the correctness and efficiency of the algorithm in the same section. Two examples of super-isolated surfaces of near-prime order and cryptographic size are given in Section~\ref{sec:dim-2-examples}. In Section~\ref{sec:dim-2-only-examples}, we prove these are the only such examples.

\subsubsection*{Acknowledgments}

I would like to thank my advisor Neal Koblitz for all of his inspiration and guidance while working on this paper. I would also like to acknowledge the support from my graduate student peers, for which I am especially grateful.

\section{Elliptic curves}\label{sec:dim-1}

\subsection{Background and notation}\label{sec:dim-1-background}

Let $p$ be a prime\footnote{Many results in this paper can be extended to varieties over arbitrary finite fields $\FF_q$, but we focus on the prime case because it is often more efficient in practice.}. For any $t \in \ZZ$, let $I(t)$ denote the set of isomorphism classes of elliptic curves $E/\FF_p$ such that $\#E(\FF_p) = p - t + 1$. A theorem of Tate says that the sets $I(t)$ are isogeny classes of elliptic curves over $\FF_p$, see \cite[Ch.~5]{silverman2009arithmetic}. The Hasse bound implies that $I(t)$ is empty when $t^2 > 4p$. An elliptic curve is \emph{ordinary} if $t \not\equiv 0 \mod{p}$.

\begin{defn}\label{def:ec-super-isolated}
	An elliptic curve $E/\FF_p$ is \emph{super-isolated} if there is only one isomorphism class in its isogeny class, i.e. $\#I(p + 1 - \#E(\FF_p)) = 1$.
\end{defn}

\begin{defn}\label{def:kronecker-class-number}
	Let $\sO$ be an order in a quadratic imaginary field, and let $\Delta$ be the discriminant of $\sO$. The \emph{Kronecker class number $H(\Delta)$ of $\Delta$} is defined to be
	\[
		H(\Delta) = \sum_{\sO' \supseteq \sO} h(\sO')
	\]
	where $h(\sO')$ denotes the class number of the order $\sO'$, and the sum is over all orders $\sO'$ of $\sO\otimes\QQ$ such that $\sO' \supseteq \sO$.
\end{defn}

\begin{thm}[{\cite[Thm.~4.6]{schoof1987nonsingular}}\ ]\label{thm:size-of-isogeny-class}
	If $t^2 < 4p$ and $t \not\equiv 0\mod{p}$, then
	\[
		\#I(t) = H(t^2 - 4p).
	\]
\end{thm}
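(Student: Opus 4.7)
The plan is to invoke the Deuring correspondence between ordinary elliptic curves over $\FF_p$ and orders in imaginary quadratic fields, then sum over the possible endomorphism rings. First I would fix $t$ and study the Frobenius endomorphism. For any $E \in I(t)$, the characteristic polynomial of Frobenius is $\pi^2 - t\pi + p$, so $\ZZ[\pi] \subseteq \End(E)$ is an order in $K = \QQ(\pi)$. The hypothesis $t^2 < 4p$ forces $K$ to be an imaginary quadratic field, and the ordinarity hypothesis $t \not\equiv 0 \pmod p$ rules out the case where $\End(E)$ is a maximal order in a quaternion algebra, so $\End(E)$ is actually an order $\sO$ in $K$ with $\ZZ[\pi] \subseteq \sO \subseteq \sO_K$. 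A direct computation shows $\disc(\ZZ[\pi]) = t^2 - 4p$, so these are exactly the orders that appear in the sum defining $H(t^2 - 4p)$.

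Next I would partition $I(t)$ by endomorphism ring, writing
\[
\#I(t) \;=\; \sum_{\ZZ[\pi] \subseteq \sO \subseteq \sO_K} \#\{E \in I(t) : \End(E) \cong \sO\}.
\]
The key step is to invoke Deuring's theorem (in the form given by Waterhouse): for each order $\sO$ containing $\ZZ[\pi]$, the set of $\FF_p$-isomorphism classes of ordinary elliptic curves with $\End(E) \cong \sO$ is a principal homogeneous space under the ideal class group $\mathrm{Cl}(\sO)$, with the class of a proper $\sO$-ideal $\mfrak{a}$ acting through the quotient isogeny with kernel $E[\mfrak{a}]$. In particular this set is non-empty (via CM lifts to characteristic zero and reduction mod $p$) and has cardinality $h(\sO)$.

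Putting the two pieces together, the sum becomes
\[
\#I(t) \;=\; \sum_{\ZZ[\pi] \subseteq \sO \subseteq \sO_K} h(\sO) \;=\; \sum_{\sO' \supseteq \sO_{t^2-4p}} h(\sO') \;=\; H(t^2 - 4p),
\]
where $\sO_{t^2-4p}$ denotes the order of discriminant $t^2-4p$, which is precisely $\ZZ[\pi]$. This matches Definition~\ref{def:kronecker-class-number}.

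The main obstacle is the Deuring correspondence itself: establishing both that \emph{every} order $\sO$ in the interval $\ZZ[\pi] \subseteq \sO \subseteq \sO_K$ is actually realized as $\End(E)$ for some $E \in I(t)$, and that the fibers over each such $\sO$ have size exactly $h(\sO)$. The existence half requires producing curves via CM lifting over a suitable number field and controlling how the endomorphism ring behaves under reduction; the counting half requires showing that the $\mathrm{Cl}(\sO)$-action is simply transitive, which hinges on Tate's theorem identifying isogenies with homomorphisms of Tate modules and on the fact that two ordinary curves with the same endomorphism ring and trace are related by a horizontal isogeny coming from a proper $\sO$-ideal. Everything else in the proof is bookkeeping with orders and class numbers.
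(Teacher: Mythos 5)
The paper does not reproduce a proof of this theorem; it is stated as a citation to \cite[Thm.~4.6]{schoof1987nonsingular}. Your argument---partitioning $I(t)$ by endomorphism ring, observing that $\ZZ[\pi]$ has discriminant $t^2-4p$ so the orders that occur are exactly those summed over in Definition~\ref{def:kronecker-class-number}, and invoking the Deuring correspondence to count $h(\sO)$ isomorphism classes for each order $\sO$ with $\ZZ[\pi]\subseteq\sO\subseteq\sO_K$---is correct and is essentially the proof given in the cited reference.
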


\begin{rem}
	The Kronecker class number is defined differently in \cite[Defn.~2.1]{schoof1987nonsingular}, but the equivalence with the Definition~\ref{def:kronecker-class-number} is proved in \cite[Prop.~2.4]{schoof1987nonsingular}.
\end{rem}

\begin{rem}\label{rem:avoid-super-singular}
	If $t = p + 1 - \#E(\FF_p) \equiv 0 \mod{p}$, then $E$ is called \emph{supersingular}. The reason that we focus on ordinary curves is because the ECDLP on supersingular curves is vulnerable to the Menezes-Okamoto-Vanstone attack \cite{menezes1993reducing}. There do exist super-isolated supersingular curves. For example, $y^2 + y = x^3 + x$ is the only curve over $\FF_2$ with $5$ points. See \cite[Thm.~4.6, Pg.~194]{schoof1987nonsingular} for a detailed formula for $\#I(t)$ when $t \equiv 0 \mod{p}$. If $p \geq 5$ then any supersingular curve over $\FF_p$ will have an even number of points. Hence we may ignore the supersingular case because we are interested in curves with prime order.
\end{rem}

\subsection{Super-isolated elliptic curves of prime order}\label{sec:dim-1-alg}

In this section, we outline a simple method to search for super-isolated elliptic curves which have prime order. The reason for considering curves of prime order is that it increases the security and efficiency of the elliptic curve cryptosystem.

First we will use the results in Section~\ref{sec:dim-1-background} to give a simple characterization of super-isolated elliptic curves over prime fields.

\begin{cor}\label{cor:super-isolated-characterization}
	Let $E/\FF_p$ be an ordinary elliptic curve with trace $t = p + 1 - \#E(\FF_p)$. Then $E$ is super-isolated if and only if
	\[
		t^2 - 4p \in \left\{ -3,-4,-7,-8,-11,-19,-43,-67,-163 \right\}.
	\]
\end{cor}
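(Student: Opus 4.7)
The plan is to convert the super-isolated condition into a condition on Kronecker class numbers via Theorem~\ref{thm:size-of-isogeny-class}, and then reduce the Kronecker class number condition to the classical class number one problem.

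First, since $E$ is ordinary, $t \not\equiv 0 \pmod p$, and the Hasse bound ensures $t^2 < 4p$. By Theorem~\ref{thm:size-of-isogeny-class}, being super-isolated, i.e.\ $\#I(t) = 1$, is equivalent to $H(t^2 - 4p) = 1$. So I only need to characterize the discriminants $\Delta$ of imaginary quadratic orders satisfying $H(\Delta) = 1$.

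Next I unpack Definition~\ref{def:kronecker-class-number}. Let $\sO$ be the order of discriminant $\Delta$ inside $K = \sO \otimes \QQ$, and let $\sO_K$ denote the maximal order. Then
\[
  H(\Delta) = \sum_{\sO' \supseteq \sO} h(\sO'),
\]
where the sum ranges over orders $\sO'$ of $K$. Since each $h(\sO') \geq 1$, $H(\Delta)$ is bounded below by the number of orders of $K$ containing $\sO$. Every order is contained in $\sO_K$, so $\sO$ and $\sO_K$ both appear in the sum; hence $H(\Delta) = 1$ forces $\sO = \sO_K$ (otherwise the sum has at least two positive terms) and $h(\sO_K) = 1$. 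Conversely, if $\sO = \sO_K$ and $h(\sO_K) = 1$, then the sum has exactly one term, equal to $1$. Therefore $H(\Delta) = 1$ if and only if $\Delta$ is the fundamental discriminant of an imaginary quadratic field of class number one.

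The final step is the classical Baker--Heegner--Stark theorem, which asserts that the imaginary quadratic fields with class number one are precisely those whose fundamental discriminants lie in
\[
  \{-3,\, -4,\, -7,\, -8,\, -11,\, -19,\, -43,\, -67,\, -163\}.
\]
Combining this with the previous paragraph gives the stated characterization. The main obstacle here is the invocation of the Baker--Heegner--Stark theorem, a deep classical result, but it is exactly the tool needed, and the rest of the argument is a short bookkeeping exercise using the definition of $H(\Delta)$ and the fact that every order embeds in the maximal order.
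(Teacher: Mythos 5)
Your proof is correct and follows the same route as the paper: reduce via Theorem~\ref{thm:size-of-isogeny-class} to $H(t^2-4p)=1$, observe that this forces $t^2-4p$ to be a fundamental discriminant of class number one, and invoke Heegner--Stark. The only difference is that you spell out the bookkeeping with Definition~\ref{def:kronecker-class-number} explicitly, which the paper compresses into a single sentence.
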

\begin{proof}
	By Theorem~\ref{thm:size-of-isogeny-class}, $\#I(t) = 1$ if and only if $t^2 - 4p$ is the discriminant of the maximal order of a quadratic imaginary field with class number $1$. It is a well known theorem of Heegner and Stark that the numbers in the statement are precisely the discriminants of such fields \cite{stark1967complete}.
\end{proof}

\begin{rem}
	Super-isolated elliptic curves are rare in the sense that if we choose a prime $p$ at random, it is unlikely there exists such a curve over $\FF_p$. Let $\pi_{SI}(x)$ denote the number of primes $p < x$ such that there exists a super-isolated elliptic curve over $\FF_p$. Any such $p$ must be of the form $t^2+d/4$, where $-d$ is one of the numbers from Corollary~\ref{cor:super-isolated-characterization} and $t$ is an integer. This shows that $\pi_{SI}(x) = O(\sqrt{x})$.
\end{rem}

\begin{rem}
	Even when super-isolated curves exist over $\FF_p$, such curves are rare in the set of all curves over $\FF_p$. There are $2p + O(1)$ distinct isomorphism classes of elliptic curves over $\FF_p$, but at most $18$ are super-isolated. The number $18$ is a rough overestimate that comes from the $9$ values in Corollary~\ref{cor:super-isolated-characterization}, and then multiplying by 2 to account for quadratic twists. It is not hard to show that other twists will not be super-isolated.
\end{rem}

\begin{rem}\label{rem:finding-curves-in-weil-p-notation}
	Another way to view the condition in Corollary~\ref{cor:super-isolated-characterization} is as follows. Let $K = \QQ(\sqrt{-d})$ be an imaginary quadratic field with class number $1$ and discriminant $-d$. Then we are searching for algebraic integers $\pi \in \sO_K$ of the form $\pi = (t + \sqrt{-d})/2$ such that $\pi\overline{\pi} = (t^2 + d)/4 = p$ is prime and $\ZZ[\pi] = \sO_K$.
\end{rem}

Suppose that $p$, $t$, and $d = t^2 - 4p$ satisfy the condition in Corollary~\ref{cor:super-isolated-characterization}. The fact that $p = (t^2+d)/4 \in \ZZ$ implies that $t \equiv d \mod{2}$. So we may replace $t$ with $2x$ or $2x + 1$ depending on $d\mod{2}$. Then $p$ and $N = p + 1 - t$ can be written as the following integral polynomials:

\begin{equation}\label{eq:polynomials-p-N-for-curves}
	p = \begin{cases}
		x^2 + \frac{d}{4} &\text{if $-d \equiv 0 \mod{4}$}
		\\
		x^2 + x + \frac{d + 1}{4} &\text{if $-d \equiv 1 \mod{4}$}
	\end{cases}
	,\quad
	N = \begin{cases}
		(x - 1)^2 + \frac{d}{4} &\text{if $-d \equiv 0 \mod{4}$}
		\\
		x^2 - x + \frac{d + 1}{4} &\text{if $-d \equiv 1 \mod{4}$}
	\end{cases}
\end{equation}

We are interested in values of $x$ such that $p$ and $N$ are simultaneously prime. Two necessary conditions for $p$ and $N$ to be simultaneously prime infinitely often are:
\begin{enumerate}[label=(\roman*)]
	\item\label{cond-p,N-irreducible} $p$ and $N$ are irreducible over $\ZZ[x]$.
	\item\label{cond-p,N-coprime} $\gcd_{a\in\ZZ} p(a)N(a) = 1$.
\end{enumerate}
It is clear that condition~\ref{cond-p,N-irreducible} is satisfied for all values of $d$. From Table~\ref{tbl:d-values-so-p-N-can-be-prime} below, condition~\ref{cond-p,N-coprime} holds for $d \in \{3,19,43,67,163\}$ (this can be checked using only a few consecutive values of $a$ \cite[Ex.~3.i, Pg.~19]{cahen1997integer}).

We now give a simple description of our search method.
\begin{enumerate}
	\item Choose $d \in \{3,19,43,67,163\}$ and let $p,N$ be as in Table~\ref{tbl:C-values-for-pN}.
	\item Choose random integers $x$ in a predetermined range until $p(x)$ and $N(x)$ are both prime.
	\item Use the CM method to recover a curve $E/\FF_p$ with $N$ points, see \cite[Ch.~18.1]{cohen2006handbook}.
\end{enumerate}

\begin{ex}\label{ex:super-isolated-curve-try-1}
	Let $d = 3$ and $x = 321438704914423479101766132343967029098$. Then $p = p(x)$ and $N = N(x)$ are both $256$-bit primes. The curve $E/\FF_p$ given by $y^2 = x^3 + 244944$ satisfies $\#E(\FF_p) = N$. This value of $x$ was found by a {\tt Sage} \cite{sage} program that randomly sampled integers from the interval $[0,2^{128}]$.
\end{ex}

\begin{ex}\label{ex:super-isolated-curve-low-hamming-weight}
	Let $d = 3$ and $x = 2^{127} + 13906$. Then $p(x)$ and $N(x)$ are $255$-bit primes. Moreover, their binary representations have a Hamming weight of $24$ and $27$ respectively. The CM method gives the curve $y^2 = x^3 + 279936$. Even though our search method does not have full control over the prime $p$, it is still possible to find primes with certain desirable properties, such as a low Hamming weight.
\end{ex}

\begin{table}[!htb]
	\centering
	\begin{tabular}{c|c|c|c}
		$-d$ & $p(x)$ & $N(x)$ & $\gcd_{a\in\ZZ} p(a)N(a)$ \\\hline
		3 & $x^2 + x + 1$ & $x^2 - x + 1$ & 1 \\
		4 & $x^2 + 1$ & $x^2 - 2x + 2$ & 2 \\
		8 & $x^2 + 2$ & $x^2 - 2x + 3$ & 6 \\
		7 & $x^2 + x + 2$ & $x^2 - x + 2$ & 4 \\
		11 & $x^2 + x + 3$ & $x^2 - x + 3$ & 3 \\
		19 & $x^2 + x + 5$ & $x^2 - x + 5$ & 1 \\
		43 & $x^2 + x + 11$ & $x^2 - x + 11$ & 1 \\
		67 & $x^2 + x + 17$ & $x^2 - x + 17$ & 1 \\
		163 & $x^2 + x + 41$ & $x^2 - x + 41$ & 1 \\
	\end{tabular}
	\caption{$\gcd_{a\in\ZZ}p(a)N(a)$ for values of $d$.}
	\label{tbl:d-values-so-p-N-can-be-prime}
\end{table}

\subsection{Estimating the number of super-isolated curves of prime order}\label{sec:count-super-isolated-curves}

In various applications, it is important to have some degree of randomness in the parameter selection. For example, a cryptosystem may require a distinct curve for each user, or use ephemeral keys such as in \cite{miele2015efficient}. In this section, we estimate the number of super-isolated elliptic curves of prime order, as a way to measure the randomness in the selection of such a curve. We also give some numerical evidence supporting our estimates.

The Bateman-Horn conjecture \cite{bateman1962heuristic} implies that if $p(x)$ and $N(x)$ are irreducible and satisfy $\gcd_{a\in\ZZ}p(a)N(a) = 1$, then the number of $x$, with $0 \leq x \leq M$, such that $p(x)$ and $N(x)$ are simultaneously prime is asymptotic to $\frac{C}{4}\int_2^M 1/\log^{2}(t) dt$ for a computable constant $C$. It is clear that $p(x)$ and $N(x)$ are irreducible, and we saw in Table~\ref{tbl:d-values-so-p-N-can-be-prime} the values of $d$ such that the second property holds. For each such $d$, Table~\ref{tbl:C-values-for-pN} gives an approximation of the constant $C$.

\begin{table}[!htb]
	\centering
	\begin{tabular}{c|c}
		$-d$ & $C$ \\\hline
		$-3$ & $\approx 2.9$ \\
		$-19$ & $\approx 3.0$ \\
		$-43$ & $\approx 10.6$ \\
		$-67$ & $\approx 17.5$ \\
		$-163$ & $\approx 44.8$
	\end{tabular}
	\caption{An approximation to the Bateman-Horn constant $C$.}
	\label{tbl:C-values-for-pN}
\end{table}

\begin{ex}\label{ex:avg-num-tries-for-super-isolated-curve}
	We ran $10000$ iterations of the search in Example~\ref{ex:super-isolated-curve-try-1}. The average number of $x$'s sampled until $p(x)$ and $N(x)$ were both prime, was $10312$. The heuristics above imply that the expected number of $x$'s that need to be sampled is
	\[
		\left(\frac{2.9}{4}\frac{1}{2^{128}}\int_2^{2^{128}} \frac{1}{\log^2 t} \ dt\right)^{-1} \approx 10610.
	\]
	The percent difference between the observed and expected is $-0.028$.
%
\end{ex}

Combining the heuristics above, we expect that the number of $x$ with $0 \leq x \leq M$ such that $p(x)$ and $N(x)$ are prime for some $d\in\{3,19,43,67,163\}$ is approximately
\[
	19.7
	\cdot
	\int_2^M\frac{1}{\log^2 t} \ dt.
\]

Since $p(x)$ has degree $2$, we can estimate the number of curves over $\FF_p$ with $p \leq M$ by choosing $x$ in the range $0 \leq x \leq \sqrt{M}$. Combined with above, we have the following estimate for the number of curves.
\begin{heuristic}
	The number of super-isolated elliptic curves of prime order over $\FF_p$ with $p \leq M$ is approximately
	\[
		19.7\int_2^{\sqrt{M}}\frac{1}{\log^2 t} \ dt.
	\]
\end{heuristic}

\section{Abelian surfaces}\label{sec:dim-2}

\subsection{Super-isolated Weil numbers}\label{sec:dim-2-background}

We define \emph{super-isolated} for an abelian variety as we did for an elliptic curve: an abelian variety whose isogeny class contains only one isomorphism class. Recall that finding a super-isolated elliptic curve over $\FF_p$ is equivalent to finding an algebraic integer $\pi$ in an imaginary quadratic field $K$ of class number $1$ such that $\pi\overline{\pi} = p$ and $\ZZ[\pi] = \sO_K$ (see Remark~\ref{rem:finding-curves-in-weil-p-notation}). The general situation is similar, only we replace $K$ with a CM field (defined below), and $\ZZ[\pi]$ with $\ZZ[\pi,\overline{\pi}]$.

\begin{defn}\label{def:cm-field}
	A number field $K$ is a \emph{complex multiplication field}, or \emph{CM field}, if $K$ is a quadratic imaginary extension of a totally real field $F$. CM fields have a unique non-trivial automorphism fixing $F$, which we denote by $\alpha \mapsto \overline{\alpha}$ and refer to as complex conjugation.
\end{defn}

\begin{defn}\label{def:weil-number}
	For any $n \in \ZZ$, a \emph{Weil $n$-number} is an algebraic integer that has absolute value $\sqrt{n}$ under every embedding to $\CC$. A \emph{Weil number} is a Weil $n$-number for some $n$. If $K$ is a CM field, then $\alpha \in \sO_K$ is a Weil number if and only if $\alpha\overline{\alpha} \in \ZZ$. It can be shown that if $\pi$ is a Weil $p$-number for a prime $p$, then either $\QQ(\pi)$ is a CM field or $\pi = \pm\sqrt{p}$.
\end{defn}

Let $A/\FF_p$ be a simple\footnote{In this paper we use simple to mean simple over the base field. Other sources sometimes use the term to mean simple over the algebraic closure.} abelian variety, and let $f$ be the characteristic polynomial of the Frobenius endomorphism of $A$. It is well known that $\#A(\FF_p) = f(1)$ and $f = h^e$ where $h$ is irreducible and $e$ is some integer. Moreover, any root $\pi$ of $f$ is a Weil $p$-number and $2\dim A = e[\QQ(\pi):\QQ]$ \cite[Thm~8]{waterhouse1971abelian}. For cryptographic reasons, we are interested in varieties with prime or near-prime order, so we will mainly focus on the case where $e = 1$.

\begin{thm}\label{thm:general-criteria-to-be-super-isolated}
	Let $A$ be a simple abelian variety over $\FF_p$, $\pi$ be a root of the characteristic polynomial of the Frobenius endomorphism, and $K = \QQ(\pi)$. Assume that $\pi \neq \pm\sqrt{p}$. Then $A$ is super-isolated if and only if $\ZZ[\pi,\overline{\pi}] = \sO_K$ and $K$ has class number $1$.
\end{thm}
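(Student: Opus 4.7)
The plan is to establish the higher-dimensional analogue of Theorem~\ref{thm:size-of-isogeny-class} and then deduce the biconditional formally. Specifically, I would argue that the number of isomorphism classes in the isogeny class of a simple abelian variety $A/\FF_p$ (with $\pi \neq \pm\sqrt{p}$) equals
\[
	\sum_{\ZZ[\pi,\overline{\pi}] \subseteq \sO \subseteq \sO_K} h(\sO),
\]
where $\sO$ ranges over orders of $K = \QQ(\pi)$ containing $\ZZ[\pi,\overline{\pi}]$. The hypothesis $\pi \neq \pm\sqrt{p}$ guarantees via the last sentence of Definition~\ref{def:weil-number} that $K$ is a CM field, so $\overline{\pi}$ is well defined and $\ZZ[\pi,\overline{\pi}]$ is an order in $\sO_K$.

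The first step is to recall Honda--Tate theory, which puts isogeny classes of simple abelian varieties over $\FF_p$ in bijection with Galois orbits of Weil $p$-numbers, so the isogeny class of $A$ depends only on the conjugacy class of $\pi$. The second step is to invoke Deligne's equivalence of categories between ordinary abelian varieties over $\FF_p$ and certain lattices with a Frobenius endomorphism (or its extension by Centeleghe--Stix for the remaining simple cases with $\pi \neq \pm\sqrt{p}$). Under this equivalence, the isomorphism classes in the isogeny class of $A$ correspond bijectively to proper ideal classes for orders $\sO$ with $\ZZ[\pi,\overline{\pi}] \subseteq \sO \subseteq \sO_K$, and summing the class numbers $h(\sO)$ over such $\sO$ yields the displayed formula.

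Once the formula is in place, both directions of the theorem fall out. Since each $h(\sO) \geq 1$, the sum equals $1$ if and only if the indexing set consists of a single order, which forces $\ZZ[\pi,\overline{\pi}] = \sO_K$, and that unique term satisfies $h(\sO_K) = 1$, i.e.\ $h(K) = 1$. The main obstacle is locating the right reference for the isomorphism-class count in the generality needed: Deligne's original theorem requires ordinariness, and while this is enough for the cryptographic applications emphasized in the paper (compare Remark~\ref{rem:avoid-super-singular}), covering all simple $A$ with $\pi \neq \pm\sqrt{p}$ calls on a more recent result such as that of Centeleghe--Stix.
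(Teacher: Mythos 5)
Your overall strategy---count the isomorphism classes in the isogeny class and observe that the count is $1$ exactly when $\ZZ[\pi,\overline{\pi}]=\sO_K$ and $h(K)=1$---is sound, and the final biconditional would indeed follow from a correct count. But the exact formula you propose,
\[
	\#\{\text{isom.\ classes}\} = \sum_{\ZZ[\pi,\overline{\pi}] \subseteq \sO \subseteq \sO_K} h(\sO),
\]
is false in general once $[K:\QQ] \geq 4$. Under the Deligne/Centeleghe--Stix equivalence, the isomorphism classes in the isogeny class correspond to classes of fractional $\ZZ[\pi,\overline{\pi}]$-ideals (lattices in $K$ up to $K^\times$-scaling), partitioned by their multiplicator ring $\sO$. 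For a quadratic order, every ideal whose multiplicator ring is exactly $\sO$ is invertible, which is why the Kronecker class number formula of Theorem~\ref{thm:size-of-isogeny-class} holds for elliptic curves. For orders in quartic CM fields this equivalence breaks down: a non-Gorenstein over-order $\sO$ admits non-invertible ideals with multiplicator ring exactly $\sO$, so the classes with that multiplicator ring can strictly outnumber $h(\sO)$. The correct general statement is only an inequality $\#\{\text{isom.\ classes}\} \geq \sum_{\sO} h(\sO)$, with equality when $\ZZ[\pi,\overline{\pi}]$ is a Bass order. Your deduction survives this repair---the inequality still forces more than one class whenever there is more than one over-order or $h(K)>1$, and when $\ZZ[\pi,\overline{\pi}]=\sO_K$ every lattice is invertible so the count is exactly $h(K)$---but as written you are asserting an identity that does not hold.

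For comparison, the paper sidesteps the counting formula entirely and uses only the two facts it actually needs from Waterhouse: (a) the endomorphism rings occurring in the isogeny class are precisely the orders between $\ZZ[\pi,\overline{\pi}]$ and $\sO_K$ (Theorems 3.5 and 6.1 of Waterhouse, the latter requiring the base field to be prime and $\pi \neq \pm\sqrt{p}$, exactly the hypotheses here), so a single endomorphism class forces $\ZZ[\pi,\overline{\pi}]=\sO_K$; and (b) the isomorphism classes with endomorphism ring the maximal order are counted by $h(K)$. This is weaker than a full classification of the isogeny class but suffices for the biconditional, and it avoids both the Gorenstein subtlety and the need to invoke the categorical equivalences. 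If you want to keep your route, replace the claimed equality by the lower bound and cite the maximal-order count separately.
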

\begin{proof}
	By \cite[Thm.~3.5]{waterhouse1969abelian}, the endomorphism ring of any variety isogenous to $A$ is an order\footnote{The statement of \cite[Thm.~3.5]{waterhouse1969abelian} refers to an order in $\End_{\FF_p} A\otimes\QQ$, but this is the same as $K$ since the base field is prime, see \cite[Ch.~2]{waterhouse1969abelian}.} in $\sO_K$ containing $\ZZ[\pi,\overline{\pi}]$. Because the base field is $\FF_p$ and $\pi \neq \pm\sqrt{p}$, the converse holds as well \cite[Thm.~6.1]{waterhouse1969abelian}. That is, every order of $\sO_K$ containing $\ZZ[\pi,\overline{\pi}]$ is the endomorphism ring of some variety isogenous to $A$. We call the set of varieties isogenous to $A$ with endomorphism ring $R$ the \emph{endomorphism class} of $R$. So there is exactly one endomorphism class if and only if $\ZZ[\pi,\overline{\pi}] = \sO_K$.
	The proof of \cite[Thm.~6.1]{waterhouse1969abelian} shows that the number of isomorphism classes in the endomorphism class of $\sO_K$ is equal to the class number of $K$.
	Therefore, the entire isogeny class of $A$ contains a single isomorphism class if and only if $\ZZ[\pi,\overline{\pi}] = \sO_K$ and $K$ has class number~$1$.
\end{proof}

\begin{defn}\label{def:super-isolated-weil-number}
	A \emph{super-isolated Weil number} is a Weil number $\pi$ such that $K = \QQ(\pi)$ has class number $1$ and $\sO_K = \ZZ[\pi,\overline{\pi}]$.
\end{defn}

In this section we are mainly interested in surfaces. The reason for not considering higher dimensional abelian varieties is that the discrete log problem on jacobians\footnote{
	Cryptosystems usually use jacobians of hyperelliptic curves rather than arbitrary varieties because they provide efficient representations necessary for practical use \cite{koblitz1989hyperelliptic}.
} of curves of genus $\geq 3$ can be solved faster than on comparably sized jacobians of curves of genus $\leq 2$ \cite{enge2002computing,gaudry2007double,smith2008isogenies}. This means that we would need to use a larger key size in order to achieve comparable security. Hence varieties of dimension $\geq 3$ are less efficient in practice. Both genus $2$ and $1$ are still considered for cryptographic use and have comparable efficiency \cite{bernstein2014kummer}.

\begin{rem}
	Another reason for focusing on curves and surfaces is that we do not expect many super-isolated varieties of dimension at least $3$. In \cite{effective1974stark}, Stark remarked that it is reasonable to believe that there are only finitely many CM fields of class number $1$. This would imply that there is a finite list of fields which admit Weil numbers that could correspond to super-isolated varieties of near-prime order.
\end{rem}

The following corollary specializes Theorem~\ref{thm:general-criteria-to-be-super-isolated} to surfaces with $e=1$.

\begin{cor}\label{cor:surfaces-super-isolated-condition}
	Let $A$ be an abelian surface over $\FF_p$. Assume that the characteristic polynomial $f$ of the Frobenius endomorphism of $A$ is irreducible. Then $A$ is super-isolated if and only if the roots of $f$ are super-isolated Weil $p$-numbers.
\end{cor}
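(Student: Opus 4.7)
The plan is to deduce the corollary almost immediately from Theorem~\ref{thm:general-criteria-to-be-super-isolated} and Definition~\ref{def:super-isolated-weil-number}; the only real content is to verify that the hypothesis $\pi \neq \pm\sqrt{p}$ of the theorem is automatic in this setting. Let $\pi$ be a root of $f$ and set $K = \QQ(\pi)$.

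First I would use the structural fact from \cite[Thm~8]{waterhouse1971abelian} cited earlier in the excerpt: writing $f = h^e$ with $h$ irreducible, we have $2\dim A = e [\QQ(\pi) : \QQ]$. Since $A$ is a surface, $\dim A = 2$, so $e[\QQ(\pi):\QQ] = 4$. The hypothesis that $f$ itself is irreducible forces $e = 1$ and hence $[\QQ(\pi):\QQ] = 4$. In particular $\pi$ cannot equal $\pm\sqrt{p}$, since that element generates an extension of $\QQ$ of degree at most $2$.

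With this in hand, Theorem~\ref{thm:general-criteria-to-be-super-isolated} applies directly: $A$ is super-isolated if and only if $\ZZ[\pi,\overline{\pi}] = \sO_K$ and $K$ has class number $1$. Since $\pi$ is a root of the characteristic polynomial of Frobenius on a variety over $\FF_p$, it is a Weil $p$-number, and by Definition~\ref{def:super-isolated-weil-number} the conjunction of these two conditions is exactly what it means for $\pi$ to be a super-isolated Weil $p$-number. The same then holds for $\overline{\pi}$ and for every Galois conjugate, so every root of $f$ is a super-isolated Weil $p$-number precisely when $A$ is super-isolated.

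There is essentially no obstacle: the corollary is a packaging of the preceding theorem in the language of Definition~\ref{def:super-isolated-weil-number}, and the only small observation needed is the degree count ruling out the exceptional case $\pi = \pm\sqrt{p}$. If anything, the place to be careful is to state that the condition on $\pi$ is equivalent to the condition on every root of $f$, which is automatic because $\ZZ[\pi,\overline{\pi}]$, the field $K$, and its class number depend only on the Galois orbit of $\pi$.
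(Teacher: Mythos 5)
Your proposal is essentially the paper's proof: both reduce the corollary to Theorem~\ref{thm:general-criteria-to-be-super-isolated} after checking that the irreducibility of $f$ rules out the exceptional case $\pi = \pm\sqrt{p}$, and your degree count $e[\QQ(\pi):\QQ] = 2\dim A = 4$ with $e=1$ is a fine way to do that. One small omission: Theorem~\ref{thm:general-criteria-to-be-super-isolated} also requires $A$ to be \emph{simple}, and the decomposition $f = h^e$ you invoke is itself stated only for simple varieties, so you should add the (easy) observation that irreducibility of the degree-$4$ polynomial $f$ forces $A$ to be simple --- otherwise $A$ would be isogenous to a product of elliptic curves and $f$ would factor --- which is exactly the point the paper makes explicitly.
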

\begin{proof}
	This follows from Theorem~\ref{thm:general-criteria-to-be-super-isolated} after noting that, because $f$ is irreducible, $A$ is simple and $\pm\sqrt{p}$ can not be roots of $f$.
\end{proof}

Therefore, in order to find super-isolated surfaces of near-prime order (note that near-prime order implies the hypothesis in Corollary~\ref{cor:surfaces-super-isolated-condition}), it is sufficient to find all super-isolated Weil numbers $\pi$, such that $\pi\overline{\pi}$ is prime and $\QQ(\pi)$ has degree $4$. There are 91 quartic CM fields of class number $1$, and they can be found in the literature \cite{yamamura1994determination,louboutin1994determination}. By \cite[Cor.~2.10]{maisner2002abelian}, if $\pi$ is a Weil $p$-number whose minimal polynomial $f$ has degree $4$, then there is a simple abelian surface over $\FF_p$ such that $f$ is the characteristic polynomial of the Frobenius endomorphism of $A$. This is a special case of a theorem of Honda, which shows that every Weil $p$-number is a root of the characteristic polynomial of the Frobenius endomorphism of some simple abelian variety over $\FF_p$ \cite{honda1968isogeny}. One can recover a representative of the isogeny class of $A$ from $\pi$ using the two dimensional analogue of the CM method \cite[Ch.~18]{cohen2006handbook}.



\subsection{Search algorithm}\label{sec:dim-2-find-weil-numbers}

In this section we describe an efficient algorithm for enumerating all super-isolated Weil numbers in a given field up to a certain bound. For the rest of the paper, unless otherwise stated, we will only consider super-isolated Weil numbers $\pi$ such that $\QQ(\pi)$ has degree $4$ over $\QQ$ and $\pi\overline{\pi}$ is prime.

\begin{rem}
	Our methods are motivated by those Wang used in \cite{wenhan2012isolated} to parameterize \emph{isolated abelian surfaces}, which are analogues of the isolated elliptic curves described in Section~\ref{sec:intro}.
\end{rem}

\begin{rem}
	A naive algorithm to find super-isolated Weil $p$-numbers is as follows. Fix a quartic CM field $K$ with class number $1$. For each prime $p$ less than a certain bound, find all possible solutions $\pi$ in $\sO_K$ to the relative norm equation $\pi\overline{\pi} = p$. This can be done using standard algorithms, see \cite[Ch.~7.5.4]{cohen2000advanced}. For each solution, check if $\ZZ[\pi,\overline{\pi}] = \sO_K$ by computing discriminants. This method is not practical because primes $p$ which admit super-isolated Weil $p$-numbers are rare.
\end{rem}

First, we will give an informal description of our algorithm. Let $K$ be a quartic CM field with class number $1$, and let $\{\alpha_1,\alpha_2,\alpha_3,\alpha_4\}$ be a basis for $\sO_K$. Then any $\pi \in \sO_K$ can be written as $\sum a_i\alpha_i$ for some $a_i\in\ZZ$. We will show that $\pi$ is a super-isolated Weil number if and only if the $a_i$ satisfy the following properties:
\begin{enumerate}[label=(\roman*)]
	\item The condition that $\pi\overline{\pi} \in \ZZ$ is equivalent to $\po(a_1,a_2,a_3,a_4) = 0$, where $\po$ is the polynomial in Equation~\ref{eq:p0} below.
	\item If (i) holds, then the condition that $\ZZ[\pi,\overline{\pi}] = \sO_K$ is equivalent to the equations
	\[
		\fl(a_1,a_2,a_3,a_4) = \pm 1
		\quad\text{ and }\quad
		\fll(a_1,a_2,a_3,a_4) = \pm 1,
	\]
	where $\fl$ and $\fll$ are the polynomials in Equations~(\ref{eq:f1},\ref{eq:f2}).
	\item The condition that $\pi\overline{\pi}$ is prime is equivalent to $\px(a_1,a_2,a_3,a_4)$ being prime, where $\px$ is the polynomial in Equation~\ref{eq:p} below.
\end{enumerate}
These equivalences are shown in the proof of Theorem~\ref{thm:g=2-algorithm-works} below. Moreover, we will also show that if $\{\alpha_1,\alpha_2,\alpha_3,\alpha_4\}$ are chosen in a certain way, then finding solutions to the equations $\po=0$, $\fl=\pm1$, $\fll=\pm1$ essentially reduces to an instance of Pell's equation. Our algorithm starts by choosing such a basis, and proceeds to enumerate tuples $(a_1,a_2,a_3,a_4)$ satisfying the conditions above.

\subsubsection{The algorithm}\label{sec:dim-2-alg}

The algorithm outlined below enumerates super-isolated Weil numbers in a certain field.

\begin{enumerate}
	\item\label{step:choose-field} Choose a quartic CM field $K$ of class number $1$, and let $F$ be the real quadratic subfield. Let $\Delta_K,\Delta_F$ denote the respective discriminants.
	
	\item\label{step:choose-basis} Choose a basis $\{\alpha_1,\alpha_2,\alpha_3,\alpha_4\}$ of $\sO_K$ such that $\alpha_1 = 1$ and $\{\alpha_1,\alpha_2\}$ form a basis for $\sO_F$.
	
	\item\label{step:choose-embeddings} Choose non-conjugate embeddings $\phi_1,\phi_2: K \hookrightarrow \CC$.
	
	\item\label{step:compute-polynomials} Compute the coefficients of the following polynomials:
	\begin{align}
		\fl &= \frac{1}{\sqrt{\Delta_F}} \sum_{i=1}^4 (\phi_1(\alpha_i + \overline{\alpha}_i) - \phi_2(\alpha_i + \overline{\alpha}_i))x_i
		\label{eq:f1}
		\\
		\fll &= \frac{\Delta_F}{\sqrt{\Delta_K}} \sum_{1 \leq i,j \leq 4} \phi_1(\alpha_i - \overline{\alpha}_i)\phi_2(\alpha_j - \overline{\alpha}_j)x_ix_j
		\label{eq:f2}
		\\
		\px &= \frac{1}{2} \sum_{1 \leq i,j \leq 4} \left( \phi_1(\alpha_i\overline{\alpha}_j) + \phi_2(\alpha_i\overline{\alpha}_j) \right)x_ix_j
		\label{eq:p}
		\\
		\po &= \frac{1}{2\sqrt{\Delta_F}}\sum_{1 \leq i,j \leq 4} \left( \phi_1(\alpha_i\overline{\alpha}_j) - \phi_2(\alpha_i\overline{\alpha}_j) \right)x_ix_j
		\label{eq:p0}
	\end{align}
	By Lemma~\ref{lem:f1,f2,p,p0-coefficients} below, $\fl \in \ZZ[x_2,x_3,x_4]$, $\fll \in \ZZ[x_3,x_4]$, and $\px,\po \in \frac{1}{2}\ZZ[x_1,x_2,x_3,x_4]$.
	
	\item\label{step:a3a4} Enumerate solutions $x_3=a_3$, $x_4=a_4$ to the equation
	\begin{equation}\label{cond:f2=pm1}
		\fll(x_3,x_4) = \pm 1
	\end{equation}
	up to a given bound. We do this as follows.

	\begin{enumerate}[label=\theenumi.\arabic*]
		\item\label{step:choose-generator} By Lemma~\ref{lem:disc-of-f2} below, we may write $\fll = ax_3^2 + bx_3x_4 + cx_4^2$ for $a,b,c \in \ZZ$ with $b^2 - 4ac = \Delta_F$. A straight-forward calculation shows that the $\ZZ$-module $I = a\ZZ + \frac{b + \sqrt{\Delta_F}}{2}\ZZ$ is an ideal of $\sO_F$.
		Here we are abusing notation by writing $\sqrt{\Delta_F}$ as an element of $F$.
		Since $K$ is a quartic CM field with class number $1$, $F$ has class number $1$. So we can choose a principal generator $\gamma$ for $I$. Let $\epsilon$ be a fundamental unit for $F$. One can find both $\gamma$ and $\epsilon$ using standard algorithms, see \cite[Ch.~4-5]{cohen1993course}.
		\item\label{step:choose-i} For each $i \in \ZZ$, with $|i|$ less than a predetermined bound, compute $\sigma = \pm\epsilon^i\gamma$ for each choice of sign.
		\item\label{step:compute-a3a4} For each $\sigma$, find a pair $a_3,a_4 \in \QQ$ such that $a_3a + a_4\frac{b+\sqrt{\Delta_F}}{2} = \sigma$.
	\end{enumerate}
	
	\item\label{step:a2} For each pair $(a_3,a_4)$, find all $a_2$ such that $x_2=a_2$, $x_3=a_3$, $x_4=a_4$ is a solution to
	\begin{equation}\label{cond:f1=pm1}
		\fl(x_2,x_3,x_4) = \pm 1.
	\end{equation}
	We can find $a_2$ as follows. By the choice of basis, the coefficient of $x_2$ in $\fl$ is non-zero. Thus, there are two possibilities for $a_2 \in \QQ$, and they are each given by linear polynomials in $a_3$, $a_4$.
	
	\item\label{step:a1} For each tuple $(a_2,a_3,a_4)$, find all $a_1$ such that $x_1=a_1$, $x_2=a_2$, $x_3=a_3$, $x_4=a_4$ is a solution to \begin{equation}\label{cond:p0=0}
		\po(x_1,x_2,x_3,x_4) = 0.
	\end{equation}
	This can be done as follows. A straightforward computation, using the fact that $\alpha_1 = 1$, shows that $2\po = g + \fl x_1$ for some polynomial $g\in \ZZ[x_2,x_3,x_4]$. Since $\fl(a_2,a_3,a_4) = \pm 1$, we have $a_1 = \mp g(a_2,a_3,a_4)$.
	
	\item\label{step:check-output} For each tuple $(a_1,a_2,a_3,a_4)$, if every $a_i$ is integral and $\px(a_1,a_2,a_3,a_4)$ is prime, then output
	\[
		\pi = a_1\alpha_1+ a_2\alpha_2 + a_3\alpha_3 + a_4\alpha_4.
	\]
\end{enumerate}

\subsubsection{Correctness}\label{sec:dim-2-alg-correct}

In this section, we will prove the correctness of the algorithm of Section~\ref{sec:dim-2-alg}. By correctness, we mean that if the algorithm outputs $\pi$, then $\pi$ is a super-isolated Weil number. Conversely, if $\pi$ is a super-isolated Weil number in $K$, then, given a large enough bound, the algorithm will eventually output $\pi$.

\begin{rem}
	This section is solely focused on the correctness of the algorithm. For a discussion of the efficiency, see Section~\ref{sec:dim-2-alg-runtime}.
\end{rem}

Our proof of correctness involves several computations, which have been broken down into several lemmas. The main idea is to find explicit polynomials representing the index of $\ZZ[\pi,\pi]$ in $\sO_K$ and the value of $\pi\overline{\pi}$, both with respect to the basis $\{\alpha_1,\alpha_2,\alpha_3,\alpha_4\}$.

First, we will prove Lemmas~\ref{lem:f1,f2,p,p0-coefficients} and \ref{lem:disc-of-f2}, which were used in the description of the polynomials $\fl,\fll,\px,\po$ in the algorithm in Section~\ref{sec:dim-2-alg}. To prove these lemmas, we start with some facts from algebraic number theory.

\begin{lem}\label{lem:trace-to-F-in-1/2Z}
	Let $K$ be a quartic CM field, $F$ be the quadratic real subfield of $K$, and $\phi_1,\phi_2$ be non-conjugate embeddings $K \hookrightarrow \CC$. If $\gamma \in \sO_K$, then
	\[
		\phi_1(\gamma + \overline{\gamma}) + \phi_2(\gamma + \overline{\gamma}) \in \ZZ
	\]
	and
	\[
		\phi_1(\gamma + \overline{\gamma}) - \phi_2(\gamma + \overline{\gamma}) \in \sqrt{\Delta_F}\ZZ.
	\]
\end{lem}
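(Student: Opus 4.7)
My plan is to reduce the claim to a routine fact about the real quadratic subfield $F$. Set $\beta := \gamma + \overline{\gamma}$. Since complex conjugation fixes $F$ and $\sO_K$ is stable under complex conjugation, $\beta \in \sO_K \cap F = \sO_F$. The key observation is that the two embeddings $\phi_1, \phi_2 \colon K \hookrightarrow \CC$, being non-conjugate, must restrict to the two distinct real embeddings of $F$; hence $\phi_1(\beta)$ and $\phi_2(\beta)$ are precisely the two Galois conjugates of $\beta \in \sO_F$ under $\mathrm{Gal}(F/\QQ)$.

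For the first assertion, $\phi_1(\beta) + \phi_2(\beta) = \tr_{F/\QQ}(\beta)$, which lies in $\ZZ$ because $\beta$ is an algebraic integer. For the second, I would fix the standard integral basis $\{1, \omega\}$ of $\sO_F$ with $\omega = (d + \sqrt{\Delta_F})/2$, where $d \in \{0,1\}$ is chosen so that $d \equiv \Delta_F \pmod{2}$. Writing $\beta = a + b\omega$ with $a,b \in \ZZ$ and applying the two embeddings (which send $\sqrt{\Delta_F}$ to $\pm\sqrt{\Delta_F}$) gives
\[
\phi_1(\beta) - \phi_2(\beta) = b\bigl(\phi_1(\omega) - \phi_2(\omega)\bigr) = \pm b\sqrt{\Delta_F} \in \sqrt{\Delta_F}\,\ZZ.
\]

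There is no real obstacle here; the only thing to be a bit careful about is justifying that $\phi_1|_F$ and $\phi_2|_F$ are the two distinct embeddings of $F$, rather than coinciding. This follows because if $\phi_1|_F = \phi_2|_F$, then $\phi_2$ would equal either $\phi_1$ or $\phi_1 \circ (\text{complex conjugation on } K)$, contradicting the non-conjugacy hypothesis. With that in hand, the rest is the standard description of $\sO_F$ via $\Delta_F$.
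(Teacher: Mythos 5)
Your proof is correct and follows essentially the same route as the paper: note $\gamma + \overline{\gamma} \in \sO_F$, express it against an integral basis involving $\sqrt{\Delta_F}$, and use that the two embeddings restrict to $F$ as the two conjugate real embeddings (sending $\sqrt{\Delta_F}$ to $\pm\sqrt{\Delta_F}$). Your version is a touch more explicit (pinning down the integral basis $\{1,\omega\}$ and justifying why $\phi_1|_F \neq \phi_2|_F$), but it is the same argument.
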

\begin{proof}
	Note that $\gamma + \overline{\gamma} \in \sO_F$, so $\phi_1(\gamma + \overline{\gamma})$ can be written as $a + b\sqrt{\Delta_F}$ for some $a,b \in \frac{1}{2}\ZZ$. Because $\phi_1(\sqrt{\Delta_F}) = -\phi_2(\sqrt{\Delta_F})$, the claim follows from noticing that
	\begin{align*}
		\phi_1(\gamma + \overline{\gamma}) + \phi_2(\gamma + \overline{\gamma}) &= 2a
		\\
		\phi_1(\gamma + \overline{\gamma}) - \phi_2(\gamma + \overline{\gamma}) &= 2b\sqrt{\Delta_F}.
	\end{align*}
\end{proof}

\begin{lem}\label{lem:imag-part-type-norm-in-deltaKF-Z}
	Let $K$ be a quartic CM field with maximal totally real subfield $F$. If $\gamma \in \sO_K$ and $\phi_1,\phi_2$ are any non-conjugate pair of embeddings $K \hookrightarrow \CC$, then
	\[
		\phi_1(\gamma - \overline{\gamma})\phi_2(\gamma - \overline{\gamma}) \in \frac{\sqrt{\Delta_K}}{\Delta_F} \cdot \ZZ.
	\]
\end{lem}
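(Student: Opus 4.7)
The plan is to convert the product $\phi_1(\delta)\phi_2(\delta)$, for $\delta := \gamma - \overline{\gamma}$, into a norm over $F/\QQ$ and then apply the standard relative discriminant scaling law. The key structural fact is that $\delta^2 \in \sO_F$ and is totally negative, which links the purely imaginary quantities $\phi_j(\delta)$ to an ideal-theoretic invariant.

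First I would use that CM conjugation is intertwined with complex conjugation on $\CC$: $\overline{\phi_j(\delta)} = \phi_j(\overline{\delta}) = -\phi_j(\delta)$, so each $\phi_j(\delta)$ is purely imaginary and satisfies $\phi_j(\delta)^2 = \sigma_j(\delta^2)$, where $\sigma_j = \phi_j|_F$ is a real embedding of $F$. Multiplying across gives
\[
	\bigl(\phi_1(\delta)\phi_2(\delta)\bigr)^2 = \sigma_1(\delta^2)\sigma_2(\delta^2) = N_{F/\QQ}(\delta^2) \geq 0,
\]
so $\phi_1(\delta)\phi_2(\delta) = \pm\sqrt{N_{F/\QQ}(\delta^2)}$.

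Next I would invoke the discriminant index formula. The case $\gamma \in F$ forces $\delta = 0$, so I may assume $\gamma \notin F$. Then $\{1, \gamma\}$ is an $F$-basis of $K$, and $M := \sO_F[\gamma]$ is a full-rank $\sO_F$-sublattice of $\sO_K$ with relative discriminant generated by
\[
	\disc_{K/F}(\{1, \gamma\}) = (\gamma - \overline{\gamma})^2 = \delta^2.
\]
Setting $\mathfrak{m} := [\sO_K : M]_{\sO_F}$, the standard Dedekind-domain scaling law $\disc_{K/F}(M) = \mathfrak{m}^2 \cdot \Delta_{K/F}$ gives the ideal identity $(\delta^2) = \mathfrak{m}^2 \Delta_{K/F}$ in $\sO_F$. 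Applying $N_{F/\QQ}$ and using multiplicativity of the ideal norm yields $N_{F/\QQ}(\delta^2) = N_{F/\QQ}(\mathfrak{m})^2 \cdot N_{F/\QQ}(\Delta_{K/F})$.

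Finally I would feed this into the tower formula $\Delta_K = N_{F/\QQ}(\Delta_{K/F})\,\Delta_F^2$, which rearranges to $\sqrt{N_{F/\QQ}(\Delta_{K/F})} = \sqrt{\Delta_K}/\Delta_F$. Taking square roots in the identity from the previous step and substituting into the square-root expression for $\phi_1(\delta)\phi_2(\delta)$ gives
\[
	\phi_1(\delta)\phi_2(\delta) = \pm N_{F/\QQ}(\mathfrak{m}) \cdot \frac{\sqrt{\Delta_K}}{\Delta_F},
\]
and since $N_{F/\QQ}(\mathfrak{m})$ is a nonnegative integer, the conclusion follows. The only subtlety is the form of the discriminant scaling law for a general $\sO_F$-module index; this is standard since $\sO_F$ is Dedekind, and in the paper's main application $K$ has class number one, hence so does $F$, so $\sO_K$ is free of rank two over $\sO_F$ and $\mathfrak{m}$ is principal.
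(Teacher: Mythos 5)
Your proof is correct and follows essentially the same route as the paper's: both reduce the claim to the identity $\bigl(\phi_1(\delta)\phi_2(\delta)\bigr)^2 = N_{F/\QQ}(\delta^2) = (\text{integer})^2\cdot N_{F/\QQ}(\Delta_{K/F})$ and then finish with the tower formula $\Delta_K = \Delta_F^2\,N_{F/\QQ}(\Delta_{K/F})$. The only difference is cosmetic: the paper phrases the middle step via the relative different and the conductor of $\sO_F[\gamma]$ (i.e. $(\gamma-\overline{\gamma})\sO_K = \mfrak{f}_{\sO_F[\gamma]}\script{D}_{K/F}$), whereas you use its squared form, the discriminant--index scaling law $\disc_{K/F}(\sO_F[\gamma]) = \mfrak{m}^2\Delta_{K/F}$.
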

\begin{proof}
	Let $\delta_{K/F}(\alpha)$ denote the relative different for any $\alpha \in \sO_K$. Because $K/F$ is a quadratic imaginary extension, we have that $\delta_{K/F}(\alpha) = \alpha - \overline{\alpha}$.
	
	We may assume $K = F(\gamma)$ otherwise the claim is trivial as $\phi_1(\gamma - \overline{\gamma})\phi_2(\gamma - \overline{\gamma}) = 0$. From the proof of \cite[Thm.~III.2.5, Pg.~198]{neukrich1999algebraic},
	\[
		\delta_{K/F}(\gamma)\sO_K = \mfrak{f}_{\sO_F[\gamma]}\script{D}_{K/F}
	\]
	where $\mfrak{f}_{\sO_F[\gamma]} = \{\alpha \in K \colon \alpha\sO_K \subseteq \sO_F[\gamma]\}$ is the conductor of the order $\sO_F[\gamma]$ in $\sO_K$ and $\script{D}_{K/F}$ is the relative different of the extension $K/F$.
	
	Note that $\gamma + \overline{\gamma} \in \sO_F$ implies that $\overline{\gamma} \in \sO_F[\gamma]$, hence $\sO_F[\gamma]$ is invariant under conjugation. It follows that $\mfrak{f}_{\sO_F[\gamma]}$ is invariant under conjugation,
	so we may write $\mfrak{f}_{\sO_F[\gamma]} = I \cdot \sO_K$ for some ideal $I \subseteq \sO_F$.
	Then
	\begin{align}\label{eq:square-of-gamma-minus-gammabar}
		\left(\phi_1(\gamma - \overline{\gamma})\phi_2(\gamma - \overline{\gamma})\right)^2
		&= N_{K/\QQ} \left(\gamma - \overline{\gamma}\right)
		\nonumber
		\\
		&= N_{K/\QQ} \left(\delta_{K/F}(\gamma)\right)
		\nonumber
		\\
		&= N_{F/\QQ}(I)^2 \cdot N_{K/\QQ}(\script{D}_{K/F})
	\end{align}
	The different and discriminant are related by the formula \cite[Cor.~III.2.10, Pg.~197]{neukrich1999algebraic}
	\begin{equation}\label{eq:relative-disc-and-diff}
		\Delta_K = \Delta_{F}^2N_{K/\QQ}\left(\script{D}_{K/F}\right).
	\end{equation}
	The claim follows from combining Equation~\ref{eq:square-of-gamma-minus-gammabar} and Equation~\ref{eq:relative-disc-and-diff} and taking square roots.
\end{proof}

\begin{lem}\label{lem:f1,f2,p,p0-coefficients}
	Let $\fl,\fll,\px,\po$ be the polynomials from Equations~(\ref{eq:f1})-(\ref{eq:p0}). Then
	\begin{enumerate}[label=(\roman*)]
		\item $\fl \in \ZZ[x_2,x_3,x_4]$
		\item $\fll \in \ZZ[x_3,x_4]$
		\item $\px \in \frac{1}{2}\ZZ[x_1,x_2,x_3,x_4]$
		\item $\po \in \frac{1}{2}\ZZ[x_1,x_2,x_3,x_4]$.
	\end{enumerate}
\end{lem}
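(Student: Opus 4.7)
The plan is to prove each claim by inspecting the coefficient of each monomial in the polynomial and checking it lies in the required ring. The two algebraic inputs we will feed into these computations are Lemma~\ref{lem:trace-to-F-in-1/2Z} (which controls the ``imaginary part'' of a trace-from-$K$-to-$F$) and Lemma~\ref{lem:imag-part-type-norm-in-deltaKF-Z} (which controls $\phi_1(\gamma-\overline\gamma)\phi_2(\gamma-\overline\gamma)$). We will use the key elementary observation that, because $K$ is a CM field and $\phi_1,\phi_2$ are non-conjugate, complex conjugation on $K$ corresponds to complex conjugation in $\CC$ under each $\phi_j$, and that $\phi_1,\phi_2$ restrict to opposite embeddings of $F$.

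For (i): each coefficient of $\fl$ has the shape $\tfrac{1}{\sqrt{\Delta_F}}\bigl(\phi_1(\alpha_i+\overline{\alpha}_i)-\phi_2(\alpha_i+\overline{\alpha}_i)\bigr)$. Since $\alpha_i+\overline{\alpha}_i\in\sO_F$, Lemma~\ref{lem:trace-to-F-in-1/2Z} gives $\phi_1(\alpha_i+\overline{\alpha}_i)-\phi_2(\alpha_i+\overline{\alpha}_i)\in\sqrt{\Delta_F}\ZZ$, so the coefficient lies in $\ZZ$. Finally, $\alpha_1=1$ forces the coefficient of $x_1$ to vanish, yielding $\fl\in\ZZ[x_2,x_3,x_4]$. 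For (iii) and (iv) the coefficient of $x_ix_j$ (with $i\neq j$ collected symmetrically) is either $\tfrac12\tr_{F/\QQ}(\alpha_i\overline{\alpha}_j+\alpha_j\overline{\alpha}_i)$ or $\tfrac{1}{2\sqrt{\Delta_F}}(\phi_1-\phi_2)$ applied to that same element of $\sO_F$; in both cases Lemma~\ref{lem:trace-to-F-in-1/2Z} together with the outer $\tfrac12$ places the coefficient in $\tfrac12\ZZ$, giving $\px,\po\in\tfrac12\ZZ[x_1,x_2,x_3,x_4]$.

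Part (ii) is the interesting one and the place I expect the main work. After symmetrising, the coefficient of $x_i^2$ in $\fll$ is $\tfrac{\Delta_F}{\sqrt{\Delta_K}}\phi_1(\alpha_i-\overline{\alpha}_i)\phi_2(\alpha_i-\overline{\alpha}_i)$, which Lemma~\ref{lem:imag-part-type-norm-in-deltaKF-Z} places directly in $\ZZ$. For cross terms the lemma does not apply term-by-term, so the plan is to use a polarisation identity: apply the same lemma to $\alpha_i+\alpha_j\in\sO_K$ to get
\[
\bigl(\phi_1(\alpha_i-\overline{\alpha}_i)+\phi_1(\alpha_j-\overline{\alpha}_j)\bigr)\bigl(\phi_2(\alpha_i-\overline{\alpha}_i)+\phi_2(\alpha_j-\overline{\alpha}_j)\bigr)\in\tfrac{\sqrt{\Delta_K}}{\Delta_F}\ZZ,
\]
expand, and subtract the two diagonal terms (each already in $\tfrac{\sqrt{\Delta_K}}{\Delta_F}\ZZ$) to see the symmetric coefficient of $x_ix_j$ also lies in $\ZZ$. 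Finally, $\alpha_1=1$ and $\alpha_2\in\sO_F$ give $\alpha_1-\overline{\alpha}_1=\alpha_2-\overline{\alpha}_2=0$, so every monomial involving $x_1$ or $x_2$ drops out, leaving $\fll\in\ZZ[x_3,x_4]$.

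The main obstacle is the cross-term bookkeeping in part (ii): Lemma~\ref{lem:imag-part-type-norm-in-deltaKF-Z} is stated for $\phi_1(\gamma-\overline\gamma)\phi_2(\gamma-\overline\gamma)$ with the same $\gamma$ on both sides, whereas the coefficient of $x_ix_j$ mixes $\alpha_i$ and $\alpha_j$. The polarisation trick above is the natural way around this, and once it is in place the rest of the proof is routine coefficient-matching plus the two basis simplifications $\alpha_1=1$ and $\alpha_2\in\sO_F$.
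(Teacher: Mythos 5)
Your proof is correct and follows essentially the same route as the paper: parts (i), (iii), (iv) fall out of Lemma~\ref{lem:trace-to-F-in-1/2Z} applied to the appropriate traces, and part (ii) is handled exactly as the paper does it, via the polarization identity $\phi_1(\delta_i)\phi_2(\delta_j)+\phi_1(\delta_j)\phi_2(\delta_i)=\phi_1(\delta_i+\delta_j)\phi_2(\delta_i+\delta_j)-\phi_1(\delta_i)\phi_2(\delta_i)-\phi_1(\delta_j)\phi_2(\delta_j)$ together with Lemma~\ref{lem:imag-part-type-norm-in-deltaKF-Z}, plus the observations $\delta_1=\delta_2=0$ and $\alpha_1=1$ to account for the restricted variable sets.
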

\begin{proof}
	It is straightforward from the definition of $\fl,\fll$ and the choice of basis $\{\alpha_1,\alpha_2,\alpha_3,\alpha_4\}$ that $\fl \in \CC[x_2,x_3,x_4]$ and $\fll \in \CC[x_3,x_4]$. So it remains to check the domain of the coefficients.
	
	The claims for $\fl$, $\px$, and $\po$ all follow directly from Lemma~\ref{lem:trace-to-F-in-1/2Z}. For $\fll$, note that the coefficient of $x_ix_j$ is $\phi_1(\delta_i)\phi_2(\delta_j) + \phi_1(\delta_j)\phi_2(\delta_i)$, where $\delta_i = \alpha_i - \overline{\alpha}_i$. The claim for $\fll$ follows from Lemma~\ref{lem:imag-part-type-norm-in-deltaKF-Z} and the fact that
	\[
		\phi_1(\delta_i)\phi_2(\delta_j) + \phi_1(\delta_j)\phi_2(\delta_i)
		=
		\phi_1(\delta_i+\delta_j)\phi_2(\delta_i+\delta_j)
		-
		\phi_1(\delta_i)\phi_2(\delta_i)
		-
		\phi_1(\delta_j)\phi_2(\delta_j).
	\]
\end{proof}

\begin{lem}\label{lem:disc-of-f2}
	$\fll$ is a integral bilinear quadratic form in $x_3,x_4$ with discriminant $\Delta_F$.
\end{lem}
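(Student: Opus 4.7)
The plan is to note that integrality of $\fll$ was already established in Lemma~\ref{lem:f1,f2,p,p0-coefficients}, so the only remaining task is computing the discriminant of $\fll$ as a quadratic form in $x_3, x_4$. The key observation is that since $\alpha_1 = 1 \in F$ and $\alpha_2 \in F$, we have $\alpha_1 - \overline{\alpha}_1 = \alpha_2 - \overline{\alpha}_2 = 0$, so every term in the defining sum~\eqref{eq:f2} involving $x_1$ or $x_2$ vanishes. Hence $\fll = a x_3^2 + b x_3 x_4 + c x_4^2$ with
\[
a = \tfrac{\Delta_F}{\sqrt{\Delta_K}} \phi_1(\delta_3)\phi_2(\delta_3), \quad b = \tfrac{\Delta_F}{\sqrt{\Delta_K}}\bigl(\phi_1(\delta_3)\phi_2(\delta_4) + \phi_1(\delta_4)\phi_2(\delta_3)\bigr), \quad c = \tfrac{\Delta_F}{\sqrt{\Delta_K}}\phi_1(\delta_4)\phi_2(\delta_4),
\]
where $\delta_i = \alpha_i - \overline{\alpha}_i$.

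Next, I would expand $b^2 - 4ac$ directly. The mixed terms yield a perfect square:
\[
b^2 - 4ac = \frac{\Delta_F^2}{\Delta_K} \bigl(\phi_1(\delta_3)\phi_2(\delta_4) - \phi_1(\delta_4)\phi_2(\delta_3)\bigr)^2.
\]
So the problem reduces to evaluating the square of the $2 \times 2$ determinant $D := \phi_1(\delta_3)\phi_2(\delta_4) - \phi_1(\delta_4)\phi_2(\delta_3)$.

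The main step is to identify $D^2$ with $\Delta_K/\Delta_F$. For this, I would use the formula $\Delta_K = (\det M)^2$, where $M$ is the $4 \times 4$ matrix with rows indexed by the four embeddings $\phi_1, \phi_2, \overline{\phi}_1, \overline{\phi}_2$ and columns indexed by $\alpha_1, \alpha_2, \alpha_3, \alpha_4$. Subtracting row 1 from row 3 and row 2 from row 4, and using $\phi_k(\alpha_1) = 1$ together with $\overline{\phi}_k(\alpha_2) = \phi_k(\alpha_2)$ (since $\alpha_2 \in F$), the last two columns of the new rows 3 and 4 become $(-\phi_1(\delta_3), -\phi_1(\delta_4))$ and $(-\phi_2(\delta_3), -\phi_2(\delta_4))$, while the first two columns of these rows vanish. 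The determinant thus block-decomposes as
\[
\det M = \det\begin{pmatrix} 1 & \phi_1(\alpha_2) \\ 1 & \phi_2(\alpha_2) \end{pmatrix} \cdot \det\begin{pmatrix} -\phi_1(\delta_3) & -\phi_1(\delta_4) \\ -\phi_2(\delta_3) & -\phi_2(\delta_4) \end{pmatrix}.
\]
The first factor squared is exactly $\Delta_F$ (the discriminant of $\sO_F$ with respect to the basis $\{1, \alpha_2\}$), and the second factor equals $D$.

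Combining, $\Delta_K = \Delta_F \cdot D^2$, so $D^2 = \Delta_K/\Delta_F$, and substituting back gives $b^2 - 4ac = \tfrac{\Delta_F^2}{\Delta_K} \cdot \tfrac{\Delta_K}{\Delta_F} = \Delta_F$. The main (very minor) obstacle is just keeping signs and conjugations straight in the block determinant step; everything else is direct bookkeeping.
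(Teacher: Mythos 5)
Your proof is correct and follows essentially the same route as the paper's: reduce $b^2-4ac$ to $\frac{\Delta_F^2}{\Delta_K}D^2$ where $D=\phi_1(\delta_3)\phi_2(\delta_4)-\phi_1(\delta_4)\phi_2(\delta_3)$, then compute $\Delta_K$ as the square of the $4\times 4$ embedding determinant and reduce it to $\Delta_F\cdot D^2$ via row operations and a block decomposition. The paper just writes out the same matrix (with rows in a different order) and states the factorization without the intermediate row-reduction step that you make explicit.
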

\begin{proof}
	It is clear from the definition that $\fll$ is a homogeneous polynomial of degree $2$, and by Lemma~\ref{lem:f1,f2,p,p0-coefficients}, $\fll \in \ZZ[x_3,x_4]$. So it remains to calculate the discriminant.
	
	Let $\delta_i = \alpha_i - \overline{\alpha}_i$. By definition,
	\begin{align*}
		\disc \fll
		&= \frac{\Delta_F^2}{\Delta_K}\left(
		\left(\phi_1(\delta_3)\phi_2(\delta_4) + \phi_2(\delta_3)\phi_1(\delta_4)\right)^2
		-
		4\phi_1(\delta_3)\phi_2(\delta_3)\phi_1(\delta_4)\phi_2(\delta_4)
		\right)
		\\
		&=
		\frac{\Delta_F^2}{\Delta_K}\left(\phi_1(\delta_3)\phi_2(\delta_4) - \phi_2(\delta_3)\phi_1(\delta_4)\right)^2.
	\end{align*}
	Now we compute
	\begin{align*}
		\Delta_K
		&= \det\begin{pmatrix}
		1 & \phi_1(\alpha_2) & \phi_1(\alpha_3) & \phi_1(\alpha_4)
		\\
		1 & \phi_1(\alpha_2) & \phi_1(\overline{\alpha_3}) & \phi_1(\overline{\alpha}_4)
		\\
		1 & \phi_2(\alpha_2) & \phi_2(\alpha_3) & \phi_2(\alpha_4)
		\\
		1 & \phi_2(\alpha_2) & \phi_2(\overline{\alpha}_3) & \phi_2(\overline{\alpha}_4)
		\end{pmatrix}^2
		\\
		&=
		\left(\phi_1(\alpha_2) - \phi_2(\alpha_2)\right)^2\left(\phi_1(\delta_3)\phi_2(\delta_4) - \phi_2(\delta_3)\phi_1(\delta_4)\right)^2
		\\
		&=
		\Delta_F\left(\phi_1(\delta_3)\phi_2(\delta_4) - \phi_2(\delta_3)\phi_1(\delta_4)\right)^2.
	\end{align*}
\end{proof}

Next we prove the correctness of step~\ref{step:a3a4} using our previous lemmas.

\begin{lem}\label{lem:stepa3a4-works}
	If $(a_3,a_4)$ is outputted in step~\ref{step:a3a4} of the algorithm in Section~\ref{sec:dim-2-alg}, then $\fll(a_3,a_4) = \pm 1$. Moreover, if $x_3=a_3$, $x_4=a_4$ is an integral solution to $\fll(x_3,x_4) = \pm 1$, then, given a large enough bound, step~\ref{step:a3a4} will eventually output the pair $(a_3,a_4)$.
\end{lem}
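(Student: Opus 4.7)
The strategy is to identify integral solutions of $\fll(x_3,x_4) = \pm1$ with generators of the principal ideal $I = a\ZZ + \tfrac{b+\sqrt{\Delta_F}}{2}\ZZ$, and then use the structure of $\sO_F^\times$ in a real quadratic field of class number $1$ to enumerate all such generators.

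First I would establish the key numerical identity. For any $a_3,a_4\in\ZZ$, set
\[
\sigma = a_3\cdot a + a_4\cdot\tfrac{b+\sqrt{\Delta_F}}{2} \in I.
\]
A direct expansion, using $b^2-\Delta_F = 4ac$ from Lemma~\ref{lem:disc-of-f2}, gives $\sigma\overline{\sigma} = a(aa_3^2 + ba_3a_4 + ca_4^2) = a\cdot \fll(a_3,a_4)$. Since $N_{F/\QQ}(I) = |a|$ (a standard fact about the ideal $a\ZZ + \tfrac{b+\sqrt{\Delta_F}}{2}\ZZ$ whose form has discriminant $\Delta_F$), this identity says that the integer pairs $(a_3,a_4)$ satisfying $\fll(a_3,a_4)=\pm1$ correspond bijectively (via $\sigma$) to elements of $I$ whose norm has absolute value equal to $N(I)$.

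Next I would translate this norm condition into a condition on $\sigma/\gamma$. Writing $\sigma = \gamma\mu$ with $\mu \in \sO_F$ (possible because $\sigma\in I = \gamma\sO_F$), multiplicativity of the norm gives $|N(\mu)| = |N(\sigma)|/|N(\gamma)| = |N(\sigma)|/N(I)$. Hence $\fll(a_3,a_4)=\pm1$ if and only if $\mu \in \sO_F^\times$. Since $F$ is real quadratic, Dirichlet's unit theorem gives $\sO_F^\times = \{\pm\epsilon^i : i\in\ZZ\}$, so the integral solutions correspond precisely to the elements $\sigma = \pm\epsilon^i\gamma$ for $i\in\ZZ$.

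This bijection proves the two directions simultaneously. For soundness, any $(a_3,a_4)$ that the algorithm outputs comes from some $\sigma = \pm\epsilon^i\gamma$, and the identity above yields $\fll(a_3,a_4) = \sigma\overline{\sigma}/a = \pm N(\gamma)/a = \pm 1$. For completeness, given an integral solution $(a_3,a_4)$, the corresponding $\sigma$ equals $\pm\epsilon^{i_0}\gamma$ for some specific $i_0\in\ZZ$; as soon as the predetermined bound in step~\ref{step:choose-i} exceeds $|i_0|$, the algorithm will enumerate this $\sigma$ and recover $(a_3,a_4)$ in step~\ref{step:compute-a3a4}. The main (minor) obstacle is just the bookkeeping of signs and the verification that $N(I)=|a|$ matches up with the $\pm 1$ on the right-hand side of $\fll = \pm 1$; everything else is routine algebraic-number-theoretic manipulation.
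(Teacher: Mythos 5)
Your proof is correct and follows essentially the same route as the paper's: compute $N_{F/\QQ}(\sigma) = a\,\fll(a_3,a_4)$, use $N_{F/\QQ}(I)=|a|$, and identify integral solutions with generators of $I$, which are exactly $\pm\epsilon^i\gamma$. The only cosmetic difference is that you make the last step explicit by writing $\sigma=\gamma\mu$ and invoking Dirichlet's unit theorem on $\mu$, whereas the paper phrases the same fact as ``$\sigma\sO_F=I$'' and implicitly enumerates generators; these are equivalent.
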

\begin{proof}
	
	Following the notation from the algorithm, let $\fll(x_3,x_4) = ax_3^2 + bx_3x_4 + cx_4^2$. Recall from Lemma~\ref{lem:disc-of-f2} that $a,b,c\in\ZZ$ and $b^2 - 4ac = \Delta_F$. Note that $\{a, (b+\sqrt{\Delta_F})/2\}$ is a $\QQ$-basis for $F$. Here we are abusing notation by writing $\sqrt{\Delta_F}$ as an element of $F$. So we can write any $\sigma \in F$ as
	\[
		\sigma = ax + \frac{b + \sqrt{\Delta_F}}{2}y,
	\]
	for some $x,y \in \QQ$. Then the norm of $\sigma$ is
	\[
		\norm_{F/\QQ}(\sigma)
		= \left(ax + \frac{b + \sqrt{\Delta_F}}{2}y\right)\left(ax + \frac{b - \sqrt{\Delta_F}}{2}y\right)
		= a\left(ax^2 + bxy + cy^2\right) = a\fll(x,y).
	\]
	Therefore $\fll(x,y) = \pm 1$ if and only if the corresponding $\sigma$ has norm $\pm a$. Moreover, $x,y \in \ZZ$ if and only if $\sigma$ lies in the ideal $I = a\ZZ + (b+\sqrt{\Delta_F})/2\ZZ$ (one can show this is an ideal using the fact that $b^2 - 4ac = \Delta_F$).
	Because $\norm_{F/\QQ}(I) = |a|$,
	it follows that $x_3=x,x_4=y$ is an integral solution to $\fll(x_3,x_4) = \pm 1$ if and only if $\sigma\sO_F = I$.
	Therefore, we have a bijection between integral solutions to $\fll(x_3,x_4) = \pm 1$ and generators of $I$.
	
	The claim follows as steps~(\ref{step:choose-generator})-(\ref{step:compute-a3a4}) enumerate all generators $\sigma$ for the ideal $I$, and compute the associated integral solution to $\fll = \pm 1$.
\end{proof}

Now we will find an explicit $\ZZ$-basis for the order $\ZZ[\pi,\overline{\pi}]$. This will allow us to write down a formula for $\disc\ZZ[\pi,\overline{\pi}]$ in terms of the coefficients of $\pi$ with respect to the basis $\{\alpha_1,\alpha_2,\alpha_3,\alpha_4\}$ for $\sO_K$. We will use this formula to determine when $\ZZ[\pi,\overline{\pi}] = \sO_K$.

\begin{lem}\label{lem:1-pi-pibar-pi2-basis-for-Z-pi-pibar}
	Let $K$ be a quartic CM field and let $\pi \in \sO_K$ be an Weil $p$-number. Then $B = \free{1,\pi,\overline{\pi},\pi^2}$ generates $\ZZ[\pi,\overline{\pi}]$ as a $\ZZ$-module.
\end{lem}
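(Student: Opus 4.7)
The plan is to exploit the fact that both $\pi$ and $\overline{\pi}$ are roots of a common quadratic polynomial over $\sO_F$, where $F$ is the real quadratic subfield of $K$. Setting $s = \pi + \overline{\pi} \in \sO_F$, the identity $\pi\overline{\pi} = p$ shows that $\pi$ and $\overline{\pi}$ are the two roots of $x^2 - sx + p$, which gives the key relations $\overline{\pi} = s - \pi$ and $\pi^2 = s\pi - p$.

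First I would prove the equality of $\ZZ$-modules $\ZZ[\pi,\overline{\pi}] = \ZZ[s] + \ZZ[s]\pi$. The inclusion $\supseteq$ is immediate since $s, \pi \in \ZZ[\pi,\overline{\pi}]$. For the reverse inclusion, I would verify that $\ZZ[s] + \ZZ[s]\pi$ is closed under multiplication: for $\alpha_i, \beta_i \in \ZZ[s]$, one computes
\[
(\alpha_1 + \beta_1\pi)(\alpha_2 + \beta_2\pi) = (\alpha_1\alpha_2 - p\beta_1\beta_2) + (\alpha_1\beta_2 + \alpha_2\beta_1 + s\beta_1\beta_2)\pi
\]
using $\pi^2 = s\pi - p$, and both coefficients lie in $\ZZ[s]$. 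Thus $\ZZ[s] + \ZZ[s]\pi$ is a subring of $\sO_K$ containing $\pi$ and $\overline{\pi} = s - \pi$, hence contains $\ZZ[\pi,\overline{\pi}]$.

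Next, I would reduce $\ZZ[s]$ to a rank-$2$ $\ZZ$-module. In the generic case $\QQ(\pi) = K$, if $s \in \QQ$ then $\pi$ satisfies a degree-$2$ equation over $\QQ$, contradicting $[\QQ(\pi):\QQ] = 4$; so $s$ has degree $2$ over $\QQ$ with a monic integral minimal polynomial, forcing $\ZZ[s] = \ZZ + \ZZ s$. The degenerate cases where $\pi$ lies in a proper subfield of $K$ (forcing $\pi = \pm\sqrt{p}$ or $\pi$ of degree $2$) can be handled separately and are easier, since $\ZZ[\pi,\overline{\pi}]$ then has rank at most $2$.

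Combining these two steps yields $\ZZ[\pi,\overline{\pi}] = \ZZ\cdot 1 + \ZZ\cdot s + \ZZ\cdot \pi + \ZZ\cdot s\pi$. Substituting $s = \pi + \overline{\pi}$ and $s\pi = \pi^2 + p$ and performing a unimodular change of basis (replace $s$ by $s - \pi = \overline{\pi}$ and $s\pi$ by $s\pi - p = \pi^2$) produces the asserted generating set $\{1,\pi,\overline{\pi},\pi^2\}$. The only real step requiring care is the multiplicative closure of $\ZZ[s] + \ZZ[s]\pi$; the rest is bookkeeping.
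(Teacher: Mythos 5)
Your proof is correct, and it takes a genuinely different route from the paper's. The paper argues monomial by monomial: since $\pi\overline{\pi}=p\in\ZZ$, every product $\pi^i\overline{\pi}^j$ collapses to a pure power of $\pi$ or of $\overline{\pi}$, and it then verifies directly that $\pi^3$, $\overline{\pi}^2$, $\overline{\pi}^3$ lie in $\Span(B)$ — using the monic quadratic satisfied by $\pi+\overline{\pi}$ over $\ZZ$ to handle $\overline{\pi}^2$, and the quartic characteristic polynomial $x^4-cx^3+dx^2-cpx+p^2$ of $\pi$ together with $\overline{\pi}=p/\pi$ to handle $\pi^3$ and $\overline{\pi}^3$. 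You instead exhibit the ring structure: $\ZZ[\pi,\overline{\pi}]=\ZZ[s]+\ZZ[s]\pi$ with $s=\pi+\overline{\pi}$, then $\ZZ[s]=\ZZ+\ZZ s$, then a unimodular change of generators. Your version is more conceptual — it identifies $\ZZ[\pi,\overline{\pi}]$ as the $\ZZ[s]$-module generated by $1,\pi$, which is the real content of the lemma — and it never needs the quartic characteristic polynomial, since the single relation $\pi^2=s\pi-p$ does all the work; it also generalizes cleanly to CM fields of higher degree. The paper's computation is shorter to write down but leaves this module structure implicit. One small simplification: your case split on $[\QQ(\pi):\QQ]$ is unnecessary, because $\ZZ[s]=\ZZ+\ZZ s$ holds whether $s$ has degree $1$ or $2$ over $\QQ$ (if $s\in\ZZ$ the sum is just $\ZZ$ and the generating set $\{1,s,\pi,s\pi\}$ is merely redundant), so the whole argument goes through uniformly.
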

\begin{proof}
	We will show that any power of $\pi$ or $\overline{\pi}$ is contained in $\Span(B)$. The claim will follow because $\pi\overline{\pi} \in \ZZ$, so any product $\pi^i\overline{\pi}^j$ can be rewritten as sums of powers of $\pi$ or $\overline{\pi}$. Because $K$ is a quartic extension, we only have to show $\pi^3,\overline{\pi}^2,\overline{\pi}^3 \in \Span(B)$.
	
	First we will show that $\overline{\pi}^2 \in \Span(B)$. Let $F$ be the real quadratic subfield of $K$. Note that $\pi + \overline{\pi} \in \sO_F$, so it has a characteristic polynomial in $F$ of the form $x^2 + ax + b$ for some $a,b\in\ZZ$. It follows that
	\begin{align}\label{eq:pibar-squared-in-span-B}
		\left(\pi + \overline{\pi}\right)^2 &= -a\left(\pi + \overline{\pi}\right) - b
		\nonumber
		\\
		\pi^2 + \overline{\pi}^2 &= -a\pi - a\overline{\pi} - b - 2p
		\nonumber
		\\
		\overline{\pi}^2 &= -\pi^2 - a\pi - a\overline{\pi} - b - 2p.
	\end{align}
	Now recall that the characteristic polynomial of $\pi$ in $K$ is of the form $x^4 - cx^3 + dx^2 - cpx + p^2$ for some $c,d \in \ZZ$. Using the fact that $\overline{\pi} = p/\pi$, this shows that
	\begin{align}
		0 &= \pi^4 - c\pi^3 + d\pi^2 - cp\pi + p^2
		\nonumber
		\\
		\pi^3 &= c\pi^2 - d\pi + cp - p\overline{\pi}
		\label{eq:pi-cubed-in-span-B}
		\\
		\overline{\pi}^3 &= c\overline{\pi}^2 + d\overline{\pi} + cp + p\pi.
		\label{eq:pibar-cubed-in-span-B}
	\end{align}
	It follows from Equations~(\ref{eq:pibar-squared-in-span-B})-(\ref{eq:pibar-cubed-in-span-B}) that $\overline{\pi}^2,\pi^3,\overline{\pi}^3 \in \Span(B)$.
\end{proof}

\begin{lem}\label{lem:disc-1-gamma-gammabar-gamma2}
	Let $K$ be a quartic CM field and let $\phi_1,\phi_2$ be non-conjugate embeddings $K \hookrightarrow \CC$. If $\gamma \in \sO_K$, then
	\[
		\disc(1,\gamma,\overline{\gamma},\gamma^2)
		=
		\left(\phi_1(\gamma + \overline{\gamma}) - \phi_2(\gamma - \overline{\gamma})\right)^4
		\left(\phi_1(\gamma - \overline{\gamma})\phi_2(\gamma - \overline{\gamma})\right)^2.
	\]
\end{lem}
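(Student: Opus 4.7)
The plan is to compute the discriminant directly from its definition as the squared determinant of the embedding matrix, then perform row reduction to extract the claimed factors.

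First I would set up notation. The field $K$ has four embeddings into $\CC$, which come in complex-conjugate pairs: $\phi_1, \overline{\phi}_1, \phi_2, \overline{\phi}_2$, where $\overline{\phi}_i(\alpha) := \phi_i(\overline{\alpha})$. By definition,
\[
\disc(1,\gamma,\overline{\gamma},\gamma^2)
= \det(M)^2, \qquad
M = \begin{pmatrix}
1 & \phi_1(\gamma)          & \phi_1(\overline{\gamma}) & \phi_1(\gamma)^2 \\
1 & \phi_1(\overline{\gamma}) & \phi_1(\gamma)          & \phi_1(\overline{\gamma})^2 \\
1 & \phi_2(\gamma)          & \phi_2(\overline{\gamma}) & \phi_2(\gamma)^2 \\
1 & \phi_2(\overline{\gamma}) & \phi_2(\gamma)          & \phi_2(\overline{\gamma})^2
\end{pmatrix},
\]
using that $\overline{\phi}_i(\gamma^2) = \phi_i(\overline{\gamma}^2) = \phi_i(\overline{\gamma})^2$.

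Next I would reduce the determinant. Abbreviate $a = \phi_1(\gamma)$, $b = \phi_1(\overline{\gamma})$, $c = \phi_2(\gamma)$, $d = \phi_2(\overline{\gamma})$. Subtracting row $1$ from row $2$ and row $3$ from row $4$ makes rows $2$ and $4$ divisible by $b-a$ and $d-c$ respectively, with quotients $(0,1,-1,a+b)$ and $(0,1,-1,c+d)$. Pulling these factors out and then doing $R_3 \mapsto R_3 - R_1$, $R_4 \mapsto R_4 - R_2$ reduces the problem to a $2\times 2$ determinant. A short calculation along these lines yields
\[
\det(M) \;=\; (b-a)(d-c)(c+d-a-b)^2.
\]
The main (though still routine) obstacle is just bookkeeping the row operations carefully so that the square factor $(c+d-a-b)^2$ — rather than a single linear factor — emerges; this arises because rows $2$ and $4$ become proportional after extraction, forcing a vanishing and giving the doubled linear factor.

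Finally I would translate back. We have
\[
b-a = -\phi_1(\gamma-\overline{\gamma}), \quad
d-c = -\phi_2(\gamma-\overline{\gamma}), \quad
c+d-a-b = -\bigl(\phi_1(\gamma+\overline{\gamma}) - \phi_2(\gamma+\overline{\gamma})\bigr),
\]
so
\[
\det(M) = \phi_1(\gamma-\overline{\gamma})\,\phi_2(\gamma-\overline{\gamma})\,\bigl(\phi_1(\gamma+\overline{\gamma}) - \phi_2(\gamma+\overline{\gamma})\bigr)^2.
\]
Squaring gives the claimed formula
\[
\disc(1,\gamma,\overline{\gamma},\gamma^2) = \bigl(\phi_1(\gamma+\overline{\gamma}) - \phi_2(\gamma+\overline{\gamma})\bigr)^4 \bigl(\phi_1(\gamma-\overline{\gamma})\,\phi_2(\gamma-\overline{\gamma})\bigr)^2.
\]
(I note in passing that the $\gamma - \overline{\gamma}$ appearing inside the fourth power in the statement appears to be a typo for $\gamma + \overline{\gamma}$; the symmetry of the discriminant and the row-reduction above both force the $+$ sign.)
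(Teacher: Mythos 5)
Your proof is correct, including the identification of the typo: the $\phi_2(\gamma - \overline{\gamma})$ inside the fourth power should indeed be $\phi_2(\gamma + \overline{\gamma})$, as one can confirm both from your row reduction and from the way the lemma is applied in the proof of Theorem~\ref{thm:g=2-algorithm-works}, where $\disc B$ is equated with $\Delta_K \fl^4 \fll^2$ and $\fl$ is built from $\phi_1(\alpha_i + \overline{\alpha}_i) - \phi_2(\alpha_i + \overline{\alpha}_i)$.

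The paper's proof takes the trace-form route: it writes $\disc(1,\gamma,\overline{\gamma},\gamma^2) = \det(\tr_{K/\QQ}\beta_i\beta_j)$, expresses each trace in terms of $\gamma_i = \phi_i(\gamma)$ and $\overline{\gamma}_i$, and then asserts (without showing the work) that the resulting polynomial in $\ZZ[\gamma_1,\gamma_2,\overline{\gamma}_1,\overline{\gamma}_2]$ factors into the claimed form. You instead work directly with the embedding matrix $M$, using $\disc = (\det M)^2$, and carry out explicit row reduction to exhibit the factorization $\det M = (b-a)(d-c)(c+d-a-b)^2$. These are equivalent starting points (the trace matrix is essentially $M^T M$), but the embedding matrix makes the linear-factor structure transparent: the two row differences pull out $(b-a)$ and $(d-c)$, and the then-proportional rows force the square of the remaining linear factor. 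This is a cleaner and more informative route than brute-forcing the $4\times 4$ trace determinant, and it gives a verifiable account of why the square appears. Your computation and the translation back to $\phi_i(\gamma\pm\overline{\gamma})$, including the cancellation of signs in $(-1)\cdot(-1)\cdot(-1)^2$, check out.
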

\begin{proof}
	Let $\beta_1 = 1, \beta_2 = \gamma, \beta_3 = \overline{\gamma}, \beta_4 = \gamma^2$. Then $\disc(1,\gamma,\overline{\gamma},\gamma^2) = \det \tr\beta_i\beta_j$. Let $\gamma_i = \phi_i(\gamma)$. Because $K$ is a CM field, complex conjugation commutes with embeddings into $\CC$, so $\phi_i(\overline{\gamma}) = \overline{\gamma}_i$.
	Using this, we can compute $\tr\beta_i\beta_j$ in terms of $\gamma_1,\gamma_2$. For example:
	\begin{align*}
		\tr_{K/\QQ}\beta_3\beta_4 = \tr_{K/\QQ}\gamma^2\overline{\gamma} &= \gamma_1^2\overline{\gamma}_1 + \gamma_2^2\overline{\gamma}_2 + \gamma_1\overline{\gamma}_1^2 + \gamma_2\overline{\gamma}_2^2
	\end{align*}
	A straightforward computation shows that $\det \tr \beta_i\beta_j$, when viewed as a polynomial in the ring $\ZZ[\gamma_1,\gamma_2,\overline{\gamma}_1,\overline{\gamma}_2]$, factors into the desired form.
\end{proof}

We are now ready to prove the correctness of the algorithm.

\begin{thm}\label{thm:g=2-algorithm-works}
	If the algorithm of Section~\ref{sec:dim-2-alg} outputs $\pi$, then $\pi$ is a super-isolated Weil number. Moreover, for any fixed super-isolated Weil number $\pi$, if the algorithm is given $K = \QQ(\pi)$ and a large enough bound, then it will eventually output $\pi$.
\end{thm}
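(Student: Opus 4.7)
The plan is to establish the three equivalences (i)--(iii) sketched before the algorithm, each translating a property of $\pi = \sum a_i \alpha_i \in \sO_K$ into a polynomial condition on $(a_1,a_2,a_3,a_4)$. Once these are in hand, the correctness of the algorithm follows by matching each step against the corresponding condition.

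For (i) and (iii), the key observation is that $\pi\overline{\pi} \in F = \QQ(\sqrt{\Delta_F})$, so it can be written uniquely as $A + B\sqrt{\Delta_F}$ with $A,B \in \QQ$. Taking the symmetric and antisymmetric combinations of $\phi_1(\pi\overline{\pi})$ and $\phi_2(\pi\overline{\pi})$, expanding $\pi\overline{\pi} = \sum_{i,j} a_ia_j \alpha_i\overline{\alpha}_j$, and comparing with Equations~(\ref{eq:p}) and (\ref{eq:p0}), one reads off $A = \px(a)$ and $B = \po(a)$. Therefore $\pi\overline{\pi} \in \ZZ$ iff $\po(a) = 0$, and in that case $\pi\overline{\pi} = \px(a)$, giving (i) and (iii) in one stroke.

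The main work is in (ii), and it is the step I expect to be the central obstacle. By Lemma~\ref{lem:1-pi-pibar-pi2-basis-for-Z-pi-pibar}, the set $B_\pi = \{1, \pi, \overline{\pi}, \pi^2\}$ generates $\ZZ[\pi,\overline{\pi}]$ as a $\ZZ$-module, so whenever $B_\pi$ is a $\QQ$-basis of $K$ we have $[\sO_K : \ZZ[\pi,\overline{\pi}]]^2 = \disc(B_\pi)/\Delta_K$. Substituting $\pi = \sum a_i\alpha_i$ into the factored form provided by Lemma~\ref{lem:disc-1-gamma-gammabar-gamma2}, the factor coming from $\phi_1(\pi+\overline{\pi}) - \phi_2(\pi+\overline{\pi})$ is precisely $\sqrt{\Delta_F}\,\fl(a)$ (by the normalization built into Equation~(\ref{eq:f1}), justified by Lemma~\ref{lem:trace-to-F-in-1/2Z}), while the type-norm factor $\phi_1(\pi-\overline{\pi})\phi_2(\pi-\overline{\pi})$ equals $\frac{\sqrt{\Delta_K}}{\Delta_F}\fll(a)$ (by Equation~(\ref{eq:f2}) and Lemma~\ref{lem:imag-part-type-norm-in-deltaKF-Z}). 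Multiplying yields $\disc(B_\pi) = \Delta_K \cdot \fl(a)^4\fll(a)^2$, so the index is $|\fl(a)^2\fll(a)|$. By Lemma~\ref{lem:f1,f2,p,p0-coefficients} the values $\fl(a),\fll(a)$ lie in $\ZZ$, so this equals $1$ precisely when both equal $\pm 1$; the degenerate case where $B_\pi$ fails to be a $\QQ$-basis happens only if $\pi$ lies in a proper subfield of $K$, which is excluded since $\QQ(\pi) = K$.

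With the equivalences proved, correctness is bookkeeping. For the forward direction, an output $\pi$ of step~\ref{step:check-output} satisfies $\fll(a_3,a_4) = \pm 1$ (by step~\ref{step:a3a4}, valid by Lemma~\ref{lem:stepa3a4-works}), $\fl(a) = \pm 1$ (step~\ref{step:a2}), $\po(a) = 0$ (step~\ref{step:a1}), each $a_i \in \ZZ$, and $\px(a)$ prime (step~\ref{step:check-output}); combined with $h(K) = 1$ from step~\ref{step:choose-field}, the equivalences make $\pi$ a super-isolated Weil number. Conversely, given a super-isolated Weil number $\pi \in K$, its coordinates satisfy all four polynomial conditions by (i)--(iii), so Lemma~\ref{lem:stepa3a4-works} guarantees $(a_3,a_4)$ is eventually enumerated in step~\ref{step:a3a4} for any sufficiently large bound, after which steps~\ref{step:a2}--\ref{step:check-output} deterministically recover the remaining coordinates and output $\pi$.
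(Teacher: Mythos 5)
Your proof is correct and follows essentially the same route as the paper: reading off $\px(a)$ and $\po(a)$ from the decomposition of $\phi_1(\pi\overline{\pi})$ over $F$, then computing $\disc\{1,\pi,\overline{\pi},\pi^2\} = \Delta_K \fl(a)^4\fll(a)^2$ via Lemmas~\ref{lem:1-pi-pibar-pi2-basis-for-Z-pi-pibar} and \ref{lem:disc-1-gamma-gammabar-gamma2}, and then reducing to the two equations $\fl=\pm1$, $\fll=\pm1$. One small point: in the forward direction your appeal to ``$\QQ(\pi)=K$'' to rule out degeneracy of $\{1,\pi,\overline{\pi},\pi^2\}$ is slightly circular, since $\QQ(\pi)=K$ is not given there; the cleaner observation (used in the paper) is that the identity $\disc B_\pi = \Delta_K\fl(a)^4\fll(a)^2$ holds unconditionally, so $\fl=\pm1$, $\fll=\pm1$ forces $\disc B_\pi = \Delta_K \neq 0$, which then yields both linear independence and $\QQ(\pi)=K$ simultaneously.
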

\begin{proof}
	We will use the same notation as in Section~\ref{sec:dim-2-alg}. Let $a_1,a_2,a_3,a_4 \in \ZZ$ and let $\pi = \sum a_i\alpha_i$. A straightforward computation, using Lemma~\ref{lem:trace-to-F-in-1/2Z}, shows that
	\[
		\phi_1(\pi\overline{\pi}) = \px(a_1,a_2,a_3,a_4) \pm \po(a_1,a_2,a_3,a_4)\sqrt{\Delta_F}.
	\]
	It follows that $\pi$ is a Weil $p$-number for a prime $p$ if and only if the following hold:
	\begin{enumerate}[label=(\roman*)]
		\item $\po(a_1,a_2,a_3,a_4) = 0$
		\item $\px(a_1,a_2,a_3,a_4)$ is prime.
	\end{enumerate}
	
	Next we will show that if (i) holds, then $\ZZ[\pi,\overline{\pi}] = \sO_K$ if and only if the following hold:
	\begin{enumerate}[label=(\roman*)]
		\setcounter{enumi}{2}
		\item $\fl(a_1,a_2,a_3,a_4) = \pm 1$
		\item $\fll(a_1,a_2,a_3,a_4) = \pm 1$.
	\end{enumerate}
	By Lemma~\ref{lem:1-pi-pibar-pi2-basis-for-Z-pi-pibar}, $B = \{1,\pi,\overline{\pi},\pi^2\}$ spans $\ZZ[\pi,\overline{\pi}]$ as a $\ZZ$-module. Therefore $\ZZ[\pi,\overline{\pi}]$ is an order in $K$ if and only if $\disc B \neq 0$, in which case $B$ is basis for $\ZZ[\pi,\overline{\pi}]$. Hence $\ZZ[\pi,\overline{\pi}] = \sO_K$ if and only if $\disc B = \Delta_K$. By Lemma~\ref{lem:disc-1-gamma-gammabar-gamma2} and the definition of $\fl,\fll$,
	\[
		\disc B = \Delta_K\fl(a_1,a_2,a_3,a_4)^4\fll(a_1,a_2,a_3,a_4)^2.
	\]
	By Lemma~\ref{lem:f1,f2,p,p0-coefficients}, $\fl$ and $\fll$ are integer polynomials, so $\ZZ[\pi,\overline{\pi}] = \sO_K$ if and only if $|\fl(a_1,a_2,a_3,a_4)| = |\fll(a_1,a_2,a_3,a_4)| = 1$.

	We have shown that $\pi$ is a super-isolated Weil number if and only if properties (i)-(iv) hold. Because the algorithm enumerates integral tuples $(a_1,a_2,a_3,a_4)$ satisfying these properties, this shows that every algebraic integer the algorithm outputs is a super-isolated Weil number.
	
	For the second claim, suppose that $\pi = \sum a_i\alpha_i$ is a super-isolated Weil number. Then by property (iv), $x_3=a_3$, $x_4=a_4$ is an integral solution to Equation~\ref{cond:f2=pm1}. By Lemma~\ref{lem:stepa3a4-works}, for a large enough bound, the algorithm will eventually enumerate $a_3,a_4$ in step~\ref{step:a3a4}. Recall that $a_1,a_2$ are essentially determined from $a_3,a_4$ (see steps~\ref{step:a2} and \ref{step:a1}). That is, for any given solution $a_3,a_4$ to Equation~\ref{cond:f2=pm1}, there are two pairs $(a_1,a_2)$ such that $(a_1,a_2,a_3,a_4)$ satisfies Equations~(\ref{cond:f2=pm1})-(\ref{cond:p0=0}). Both pairs are found by the algorithm, so the algorithm will eventually output $\pi$.
\end{proof}

\subsubsection{Efficiency}\label{sec:dim-2-alg-runtime}

Recall that the algorithm in Section~\ref{sec:dim-2-alg} enumerates solutions $x_1=a_1$, $x_2=a_2$, $x_3=a_3$, $x_4=a_4$ to Equations~(\ref{cond:f2=pm1})-(\ref{cond:p0=0}). For each integer $i$, chosen in step~\ref{step:choose-i}, the algorithm found several (possibly non-integral) solutions, see steps~(\ref{step:a3a4})-(\ref{step:a1}). In this section, we will show that the algorithm can find all solutions $(a_1,a_2,a_3,a_4)$ with $\px(a_1,a_2,a_3,a_4) \leq N$, by checking at most $O(\log N)$ values of $i$.

To prove the claim, we first show that the value of $|a_4|$ grows exponentially with $|i|$, i.e. $\log|a_4| = \Omega(|i|)$. Then we will show that the function $\px(x_1,x_2,x_3,x_4)$, when restricted to solutions to Equations~(\ref{cond:f2=pm1})-(\ref{cond:p0=0}), is essentially bounded below by $|x_4|$.

\begin{rem}
	The reason we choose $a_4$ instead of $a_3$ is that some of the equations turn out to be simpler. The same argument could be made with $a_3$ instead.
\end{rem}

\begin{lem}\label{lem:bound-a4-from-below-by-i}
	There are computable positive constants $C_1,C_2$, with $C_2 > 1$, such that if the integer $i$ is chosen as in step~\ref{step:choose-i}, and the pair $(a_3,a_4)$, with $a_4 \neq 0$, is computed as in step~\ref{step:compute-a3a4}, then
	\[
		|a_4| \geq C_1 \cdot C_2^{|i|}.
	\]
	The constants $C_1,C_2$ depend only on the basis chosen in step~\ref{step:choose-basis} and the generator chosen in step~\ref{step:choose-generator}.
\end{lem}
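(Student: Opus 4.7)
The plan is to derive an explicit closed-form expression for $a_4$ in terms of $\epsilon$, $\gamma$, and $i$, and then exploit the fact that a fundamental unit of a real quadratic field has absolute value bounded away from $1$.

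First I would apply the nontrivial $F$-Galois automorphism (which sends $\sqrt{\Delta_F}\mapsto -\sqrt{\Delta_F}$) to the equation
\[
\sigma \;=\; a_3 a + a_4\,\tfrac{b+\sqrt{\Delta_F}}{2}, \qquad \sigma = \pm\epsilon^i\gamma,
\]
from step~\ref{step:compute-a3a4}. Denoting the conjugate of any $\alpha\in F$ by $\alpha'$, this yields $\sigma' = a_3 a + a_4\,\tfrac{b-\sqrt{\Delta_F}}{2}$, and subtracting gives
\[
a_4 \;=\; \pm\,\frac{\epsilon^i\gamma - (\epsilon')^i\gamma'}{\sqrt{\Delta_F}}.
\]
This reduces the problem to bounding the numerator from below.

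Second, since $\epsilon$ is a fundamental unit of the real quadratic field $F$ we have $\epsilon\epsilon' = N_{F/\QQ}(\epsilon) = \pm 1$, and $\epsilon$ is not a root of unity (the only roots of unity in a real quadratic field are $\pm 1$). Hence $|\epsilon|\neq 1$, and after replacing $\epsilon$ by $\epsilon^{-1}$ if necessary I may set $C_2 := |\epsilon| > 1$, so that $|\epsilon'| = C_2^{-1}$. For $i\geq 0$ the triangle inequality gives
\[
\sqrt{|\Delta_F|}\cdot|a_4| \;\geq\; |\gamma|\,C_2^{i} \;-\; |\gamma'|\,C_2^{-i},
\]
and the analogous estimate swapping the roles holds for $i\leq 0$, so for $|i|$ large the dominant term wins and I obtain $|a_4|\geq c\,C_2^{|i|}$ for an explicit $c>0$ depending only on $|\gamma|,|\gamma'|,|\Delta_F|$.

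The main obstacle is the small-$|i|$ regime, where the dominant exponential has not yet overtaken the subdominant one. Here I use the hypothesis $a_4\neq 0$: there are only finitely many exceptional $i$, and for each of them $|a_4|$ is a strictly positive rational number, so the minimum of $|a_4|/C_2^{|i|}$ over this finite set is positive. Absorbing this minimum and the earlier constant $c$ into a single $C_1>0$ yields $|a_4|\geq C_1\,C_2^{|i|}$ uniformly. By construction $C_1$ and $C_2$ depend only on $\epsilon$, $\gamma$, and the basis, as required.
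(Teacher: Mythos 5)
Your proof is correct, and it takes a genuinely different route from the paper. You derive an explicit closed form $a_4 = \pm(\epsilon^i\gamma - (\epsilon')^i\gamma')/\sqrt{\Delta_F}$ by applying the nontrivial automorphism of $F$ and subtracting, and then bound this directly by the reverse triangle inequality. The paper instead works from the original decomposition $\sigma = a_3 a + a_4\tfrac{b+\sqrt{\Delta_F}}{2}$: it first shows $|a_3| \leq C_0|a_4|$ (via the quadratic formula applied to $\fll(a_3,a_4)=\pm1$), bounds $|a_3||a| + |a_4|\tfrac{|b|+\sqrt{\Delta_F}}{2}$ from above by a multiple of $|a_4|$ and from below by $\max_j |\phi_j(\gamma\epsilon^i)|$ using the ordinary triangle inequality, and then pulls out a factor of $\max_j|\phi_j(\epsilon)|^{|i|}$ via a $\min$/$\max$ product inequality, using $\phi_1(\epsilon)\phi_2(\epsilon)=\pm1$. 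The trade-off: the paper's bound is uniform in $i$ and avoids any case split, but it needs the auxiliary estimate $|a_3| \leq C_0|a_4|$, which quietly relies on $|a_4|$ being bounded away from $0$ (the paper invokes integrality of $a_3,a_4$, even though at step~\ref{step:compute-a3a4} they are only guaranteed rational). Your approach sidesteps that entirely — you get a clean formula for $a_4$ alone — at the cost of the reverse triangle inequality giving a possibly negative lower bound for small $|i|$, which you handle with a finite case check justified by the hypothesis $a_4\neq 0$. Both proofs use the hypothesis $a_4\neq 0$, just in different places, and both yield computable constants.
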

\begin{proof}
	We will keep the notation from the algorithm in Section~\ref{sec:dim-2-alg}. Recall how the pair $(a_3,a_4)$ is constructed from $i$ in step~\ref{step:a3a4}. First we found an algebraic integer $\sigma \in \sO_F$ of the form $\sigma = \pm\epsilon^i\gamma$ where $\gamma$ generates the ideal $I = a\ZZ + (b + \sqrt{
	\Delta_F})/2\ZZ$, and $\epsilon$ is a fundamental unit of $F$. The pair $a_3,a_4$ are the coefficients of $\sigma$ with respect to the $\ZZ$-basis $\{a,(b + \sqrt{\Delta_F})/2\}$ for $I$.
	
	Using the quadratic formula and the fact that $a_3,a_4 \in \ZZ$, one can show that $\fll(a_3,a_4) = \pm 1$ implies that $|a_3| \leq C_0|a_4|$ for some constant $C_0$ that depends only on $\fll$ (hence $C_0$ depends on the basis chosen in step~\ref{step:choose-basis}). So
	\begin{align*}
		\left(|a|C_0 + \frac{|b| + \sqrt{\Delta_F}}{2}\right)|a_4|
		&\geq
		|a_3||a| + |a_4|\frac{|b| + \sqrt{\Delta_F}}{2}
		\\
		&\geq
		\max\left(\left|\phi_1(\gamma\epsilon^i)\right|,\left|\phi_2(\gamma\epsilon^i)\right|\right)
		\\
		&\geq
		\min\left(\left|\phi_1\left(\gamma\right)\right|,\left|\phi_2\left(\gamma\right)\right|\right)
		\cdot \max(\left|\phi_1(\epsilon^i)\right|,\left|\phi_2(\epsilon^i)\right|).
		\\
		&=
		\min\left(\left|\phi_1\left(\gamma\right)\right|,\left|\phi_2\left(\gamma\right)\right|\right)
		\cdot
		\max(\left|\phi_1(\epsilon)\right|,\left|\phi_2(\epsilon)\right|)^{|i|}
	\end{align*}
	The last step follows from the fact that $\phi_1(\epsilon)\phi_2(\epsilon) = \pm1$.
\end{proof}

Next we want to show that, for all integral solutions $x_1=a_1$, $x_2=a_2$, $x_3=a_3$, $x_4=a_4$ to Equations~(\ref{cond:f2=pm1})-(\ref{cond:p0=0}), the value of $\px(a_1,a_2,a_3,a_4)$ is essentially bounded below by $|a_4|$. Recall that Equations~\ref{cond:f2=pm1} and \ref{cond:f1=pm1} involve a choice of sign. To simplify our argument, we will first restrict to a specific set of signs.

Let $A$ be the set of rational tuples $(a_1,a_2,a_3,a_4) \in \QQ^4$ such that $x_1=a_1,x_2=a_2,x_3=a_3,x_4=a_4$ is a solution to the following equations:
\begin{align}
	\fl(x_2,x_3,x_4) & = 1
	\label{eq:f1=1}
	\\
	\po(x_1,x_2,x_3,x_4)  &= 0
	\label{eq:p0=+0}
	\\
	x_3 &= \frac{-b + \sqrt{b^2 - 4a(c - 1/x_4^2)}}{2a}x_4.
	\label{eq:x3=x4+sqrt}
\end{align}
Equation~\ref{eq:x3=x4+sqrt} comes from solving $\fll(x_3,x_4) = 1$ for $x_3$ (recall that $\fll = ax_3^2 + bx_3x_4 + cx_4^2$ for integers $a,b,c$). Therefore, every tuple in $A$ is a solution to Equations~(\ref{cond:f2=pm1})-(\ref{cond:p0=0}) with the positive signs. Note that every integral solution to Equations~(\ref{cond:f2=pm1})-(\ref{cond:p0=0}) lies in a set defined in a way similar to $A$, only with a possibly different choice of signs.\footnote{There are a total of $8$ choices of signs we could use to define $A$. These come from the three choices of signs: one in Equation~\ref{cond:f2=pm1}, one in Equation~\ref{cond:f1=pm1}, and one from the quadratic formula when solving Equation~\ref{cond:f2=pm1} for $x_3$. Every solution to Equations~(\ref{cond:f2=pm1})-(\ref{cond:p0=0}) lies in one of these $8$ sets.} Our arguments in the lemmas below will not depend on the choice of sign, so they will apply to any such set.

\begin{lem}\label{lem:find-p4}
	There exists an explicit function $P_4(x_4)$ such that for every $(a_1,a_2,a_3,a_4) \in A$,
	\[
		\px(a_1,a_2,a_3,a_4) = P_4(a_4),
	\]
	where $\px$ is the polynomial from Equation~\ref{eq:p}.
\end{lem}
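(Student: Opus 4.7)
The plan is to observe that the three defining equations of $A$ determine $x_3$, then $x_2$, then $x_1$ each as explicit functions of $x_4$ alone, so substituting these into $\px$ produces a one-variable expression $P_4$. The three equations are solved in the order opposite to the order in which the algorithm enumerates them.

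First, Equation~\ref{eq:x3=x4+sqrt} already provides $x_3 = h_3(x_4)$ in closed form. Using $\fll = ax_3^2+bx_3x_4+cx_4^2$ with $b^2-4ac=\Delta_F$ (Lemma~\ref{lem:disc-of-f2}), a mild simplification gives the cleaner form
\[
	h_3(x_4) = \frac{-bx_4 + \sqrt{\Delta_F\,x_4^2 + 4a}}{2a}.
\]
Second, by the choice of basis (noted in step~\ref{step:a2}), the coefficient of $x_2$ in $\fl$ is nonzero, so the linear equation $\fl(x_2,x_3,x_4) = 1$ solves uniquely for $x_2$ as a linear function of $x_3$ and $x_4$; composing with $h_3$ yields $x_2 = H_2(x_4)$.

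Third, as recorded in step~\ref{step:a1}, we may write $2\po = g + \fl\cdot x_1$ with $g \in \ZZ[x_2,x_3,x_4]$. Imposing $\po = 0$ together with $\fl = 1$ gives $x_1 = -g(x_2,x_3,x_4)$, and substituting $x_2 = H_2(x_4)$, $x_3 = h_3(x_4)$ expresses $x_1 = H_1(x_4)$. Finally, set
\[
	P_4(x_4) := \px\!\left(H_1(x_4),\,H_2(x_4),\,h_3(x_4),\,x_4\right).
\]
If $(a_1,a_2,a_3,a_4) \in A$, then by definition these entries satisfy Equations~\ref{eq:f1=1}, \ref{eq:p0=+0}, \ref{eq:x3=x4+sqrt}, forcing $a_3 = h_3(a_4)$, $a_2 = H_2(a_4)$, $a_1 = H_1(a_4)$, and hence $\px(a_1,a_2,a_3,a_4) = P_4(a_4)$.

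There is no real obstacle; the argument is purely a triangular solve in the three equations, made possible because the $x_2$ coefficient of $\fl$ is nonzero and because $\po$ has no $x_1^2$ term (a consequence of $\alpha_1 = 1$, recorded in step~\ref{step:a1}). The mild bookkeeping annoyance is simply that $h_3$, and therefore $P_4$, involves the radical $\sqrt{\Delta_F x_4^2 + 4a}$; this does not cause any trouble because we only need $P_4$ as an explicit function of $x_4$, not as a polynomial, and it will be used in the subsequent lemmas only to extract a lower bound on $\px$ in terms of $|x_4|$.
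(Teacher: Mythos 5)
Your argument is the same triangular solve as the paper's: Equation~\ref{eq:x3=x4+sqrt} gives $x_3$ as a function of $x_4$, then the nonzero $x_2$-coefficient of $\fl$ gives $x_2$ from Equation~\ref{eq:f1=1}, then the decomposition $2\po = g + \fl\,x_1$ gives $x_1$ from Equation~\ref{eq:p0=+0}, and substituting into $\px$ yields $P_4$. One small caution about your "cleaner form'': $x_4\sqrt{\Delta_F + 4a/x_4^2}$ equals $\sqrt{\Delta_F x_4^2 + 4a}$ only when $x_4 \geq 0$; for $x_4 < 0$ the sign flips, so your $h_3$ would not coincide with the right-hand side of Equation~\ref{eq:x3=x4+sqrt} on all of $A$ --- it is safer to keep the paper's unsimplified expression $g_3(x_4) = \frac{-b + \sqrt{b^2 - 4a(c - 1/x_4^2)}}{2a}x_4$, or to carry along the factor $\operatorname{sgn}(x_4)$.
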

\begin{proof}
	To prove the claim, we first find functions $g_1(x_4),g_2(x_4),g_3(x_4)$ such that for all tuples $(a_1,a_2,a_3,a_4) \in A$, $a_i = g_i(a_4)$ for $i=1,2,3$. Then we will substitute the $g_i$'s into the polynomial $\px$ in order to construct $P_4(x_4)$.
	
	By definition, if
	\[
		g_3(x_4)
		= \frac{-b + \sqrt{b^2 - 4a(c - 1/x_4^2)}}{2a}x_4,
	\]
	then $a_3 = g_3(a_4)$ for all $(a_1,a_2,a_3,a_4) \in A$.
	
	Next we will find $g_2$. Recall that $\fl(x_2,x_3,x_4)$ is a linear polynomial with a non-zero coefficient of $x_2$ (see step~\ref{step:a2} in the algorithm in Section~\ref{sec:dim-2-alg}). So we can use Equation~\ref{eq:f1=1} to write $x_2$ as a linear function of $x_3$ and $x_4$. By substituting $g_3$ for $x_3$, we obtain a function $g_2$ which satisfies $a_2 = g_2(a_4)$ for all $(a_1,a_2,a_3,a_3) \in A$.
	
	Now we will find $g_1$. Recall that $\po(x_1,x_2,x_3,x_4) = \fl x_1 + g$ for some $g\in\ZZ[x_2,x_3,x_4]$ (see step~\ref{step:a1}). By Equations~\ref{eq:f1=1} and \ref{eq:p0=+0}, $a_1 = -g(a_2,a_3,a_4)$ for all $(a_1,a_2,a_3,a_4) \in A$. By replacing $x_2,x_3$ in $-g$ with $g_2,g_3$ respectively, we obtain a function $g_1(x_4)$ such that $a_1 = g_1(a_4)$ for all $(a_1,a_2,a_3,a_4) \in A$.
	
	Let
	\[
		P_4(x_4) = \px(g_1(x_4), g_2(x_4), g_3(x_4), x_4).
	\]
	Note that $P_4$ has the desired property because for all $(a_1,a_2,a_3,a_4) \in A$, we have that $g_i(a_4) = a_i$ for $i=1,2,3$.
\end{proof}

\begin{thm}\label{thm:algorithm-bound}
	The algorithm in Section~\ref{sec:dim-2-alg} can find all super-isolated Weil $p$-numbers with $p \leq N$ in $O(\log N)$ steps. That is, for each quartic CM field $K$ of class number $1$, there is at least one set of choices that can be made in steps~\ref{step:choose-basis} and \ref{step:choose-embeddings} such that the algorithm only needs to check $O(\log N)$ values of $|i|$ in step~\ref{step:choose-i}. Here the implicit constant depends on the choices made.
\end{thm}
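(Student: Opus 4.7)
The plan is to combine Lemmas~\ref{lem:bound-a4-from-below-by-i} and \ref{lem:find-p4} by identifying $P_4(a_4)$ geometrically as the squared modulus of an embedding of $\pi = \sum a_i\alpha_i$. This will give a polynomial upper bound on $|a_4|$ in terms of $N$, and Lemma~\ref{lem:bound-a4-from-below-by-i} will then convert it into a logarithmic upper bound on $|i|$.

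First I would fix one of the (at most eight) sign-choice sets $A$ on which Lemma~\ref{lem:find-p4} produces $P_4$, and argue on each such set separately. For $(a_1,a_2,a_3,a_4) \in A$, set $\pi = \sum a_i\alpha_i$. The vanishing $\po(a_1,\ldots,a_4) = 0$, together with the identity $\phi_1(\pi\overline{\pi}) = \px(a_1,\ldots,a_4) \pm \po(a_1,\ldots,a_4)\sqrt{\Delta_F}$ established in the proof of Theorem~\ref{thm:g=2-algorithm-works}, forces $\pi\overline{\pi} = P_4(a_4) \in \ZZ$. Since $K$ is a CM field, complex conjugation commutes with each embedding $\phi_j$, so
\[
    |\phi_j(\pi)|^2 = \phi_j(\pi)\phi_j(\overline{\pi}) = \phi_j(\pi\overline{\pi}) = P_4(a_4)
\]
for $j=1,2$.

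Next I would feed this into the Minkowski embedding. Let $M$ be the real $4 \times 4$ matrix whose columns record the real and imaginary parts of $(\phi_1(\alpha_i),\phi_2(\alpha_i))$. Then
\[
    \|M(a_1,a_2,a_3,a_4)^T\|^2 = |\phi_1(\pi)|^2 + |\phi_2(\pi)|^2 = 2P_4(a_4).
\]
Because $\{\alpha_1,\ldots,\alpha_4\}$ is a $\ZZ$-basis of $\sO_K$, the matrix $M$ is non-singular (its determinant squared is $|\Delta_K|$ up to a power of $2$), so there is a positive constant $C_3$, depending only on the basis and embeddings, with $\|Mv\|^2 \geq C_3\|v\|^2$ for every $v \in \mathbb{R}^4$. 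Applying this to $v = (a_1,a_2,a_3,a_4)^T$ yields $2P_4(a_4) \geq C_3 a_4^2$, so $P_4(a_4) \leq N$ forces $|a_4| \leq \sqrt{2N/C_3}$. Combining with Lemma~\ref{lem:bound-a4-from-below-by-i} then gives $C_1 C_2^{|i|} \leq \sqrt{2N/C_3}$, hence $|i| = O(\log N)$. Running this argument on each of the finitely many sign sets $A$ completes the proof.

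The main obstacle is the norm-square identification in the second paragraph, that is, translating the algebraic function $P_4(a_4)$ into the analytic quantity $|\phi_j(\pi)|^2$ so that the invertibility of the Minkowski matrix forces a quadratic lower bound on $P_4(a_4)$ in $|a_4|$. Once this identification is in place, the logarithmic bound on $|i|$ is an immediate consequence of the two preceding lemmas.
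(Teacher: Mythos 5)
Your proposal is correct, and it replaces the one genuinely non-rigorous step of the paper's argument with a clean theoretical one. The paper reduces the theorem to showing $\px(a_1,a_2,a_3,a_4)=\Omega(|a_4|)$ on solutions of Equations~(\ref{cond:f2=pm1})--(\ref{cond:p0=0}), exactly as you do, but then establishes this by \emph{computing} $P_4$ separately for each of the $91$ fields and each of the $8$ sign sets and observing that $|P_4(x_4)|=\Omega(x_4^4)$ ``seems to always hold in practice.'' Your Minkowski-embedding argument proves the needed lower bound uniformly and unconditionally: since $\po(a)=0$ forces $\pi\overline{\pi}\in\QQ$ and hence $|\phi_1(\pi)|^2=|\phi_2(\pi)|^2=\px(a)=P_4(a_4)$, the non-singularity of the embedding matrix of the $\ZZ$-basis gives $P_4(a_4)\geq \tfrac{C_3}{2}a_4^2$, which combined with Lemma~\ref{lem:bound-a4-from-below-by-i} yields $|i|=O(\log N)$; it also works for any admissible choice of basis and embeddings, slightly strengthening the statement. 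What you lose is the exponent: $\Omega(a_4^2)$ instead of the empirically observed $\Omega(a_4^4)$, which is irrelevant for the asymptotic claim but would roughly double the explicit cutoff on $|i|$ used in the computation behind Theorem~\ref{thm:only-2-examples} (e.g.\ $|i|\leq 118$ in Appendix~\ref{sec:details-for-zeta5}). Two cosmetic points: for general rational tuples in $A$ you get $\pi\overline{\pi}\in\QQ$, not $\ZZ$; and Lemma~\ref{lem:bound-a4-from-below-by-i} excludes $a_4=0$, though that case contributes only finitely many pairs $(a_3,a_4)$ (the paper glosses over this as well).
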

\begin{proof}[Sketch of proof]
	By Theorem~\ref{thm:g=2-algorithm-works}, the algorithm will eventually output any specific super-isolated Weil number given a large enough bound. Therefore, it is sufficient to show that the value of $\px(a_1,a_2,a_3,a_4)$ in step~\ref{step:check-output} grows exponentially in $|i|$. From Lemma~\ref{lem:bound-a4-from-below-by-i}, we know that $\log |a_4| = \Omega(|i|)$, so it is sufficient to show that $\px(a_1,a_2,a_3,a_4) = \Omega(|a_4|)$.
	
	Recall that there are only $91$ such fields. For each one, we computed the function $P_4$ from Lemma~\ref{lem:find-p4} after choosing some random values in steps~\ref{step:choose-basis} and \ref{step:choose-embeddings}. We found that $|P_4(x_4)| = \Omega(x_4^4)$. We repeated the calculations for every alternative definition of the set $A$ from Lemma~\ref{lem:find-p4}, and found the same result (this property seems to always hold in practice). Some details for the case of $K = \QQ(\zeta_5)$ are given in Appendix~\ref{sec:details-for-zeta5}. We also used these calculations in the proof of Theorem~\ref{thm:only-2-examples} below.
	
	Let $(a_1,a_2,a_3,a_4)$ be any integral solution to Equations~(\ref{cond:f2=pm1})-(\ref{cond:p0=0}). Then $\px(a_1,a_2,a_3,a_4) = P_4(a_4)$ for some $P_4$ (recall the definition of $P_4$ depended on the set $A$, so there are $8$ possibilities for $P_4$). Since $|P_4(x_4)| = \Omega(x_4^4)$, it follows that $\px(a_1,a_2,a_3,a_4) = \Omega(a_4^4)$.
\end{proof}

\subsection{Examples}\label{sec:dim-2-examples}

We found the following super-isolated Weil numbers by using the algorithm in Section~\ref{sec:dim-2-alg}. Both generate non-normal quartic CM fields.

\begin{ex}\label{ex:iso-surface}
	{\tiny\[
		\pi = \frac{225058681}{16}\left(\sqrt{-19 - 8\sqrt{2}}\right)^3 + \frac{1}{16}\left(-19 - 8\sqrt{2}\right) + \frac{6822363251}{16}\sqrt{-19 - 8\sqrt{2}} - \frac{4404669978983883573}{16}.
	\]}
	Here $p = \pi\overline{\pi} = 75785615717819865717549739169971883$ is a 116 bit prime, and $N=\norm_{K/\QQ}(\pi-1)$ factors as $31$ times a $227$ bit prime. The associated surface is the jacobian of the following hyperelliptic curve over $\FF_p$:
	{\tiny\begin{align*}
		y^{2} &= 518974905053625554694780 x^{6} + 1102935355117356837110620 x^{5} + 991287292238024940555812 x^{4}
		\\ &\quad + 478588249786621434333076 x^{3} + 130273203505281201694544 x^{2} + 19179534443912344652288 x
		\\ &\quad + 1373526256863485541624.
		\end{align*}}
\end{ex}

\begin{ex}\label{ex:iso-surface2}
	{\tiny\[
		\pi = \frac{701408733}{8}\left(\sqrt{-13 - 2\sqrt{5}}\right)^3 - \frac{1}{8}\left(-13 - 2\sqrt{5}\right) + \frac{12255108743}{8}\sqrt{-13 - 2\sqrt{5}} + \frac{18762798022945344405}{8}.
	\]}
	Here $p = \pi\overline{\pi} = 5500665463278776959453617590160336793$ is a 123 bit prime, and $N = \norm_{K/\QQ}(\pi - 1)$ factors as $521$ times a $236$ bit prime. The associated surface is the jacobian of the following hyperelliptic curve over $\FF_p$:
	{\tiny\begin{align*}
		y^{2} &=
		3166541774481651094230166870474839614x^6
		+ 153452867072273239090020172039655416x^5
		\\ &\quad
		+ 4397111106428325553768487123769953829x^4
		+ 4136411707045872026156847617680586720x^3
		\\ &\quad
		+ 801646319360879802078118801683649366x^2
		+ 3958303885280886436811484306434693399x
		\\ &\quad
		+ 2303639253886822235537433002764323459.
		\end{align*}}
\end{ex}

\subsection{Main result}\label{sec:dim-2-only-examples}

Our main result is that Examples~\ref{ex:iso-surface} and \ref{ex:iso-surface2} are the only examples of super-isolated surfaces with near-prime order and cryptographic size.

\begin{thm}\label{thm:only-2-examples}
	Examples~\ref{ex:iso-surface} and \ref{ex:iso-surface2} are the only super-isolated abelian surfaces $A/\FF_p$ with the property that
	\begin{equation}\label{cond:near-prime-and-crypto-size}
		\#A(\FF_p) = cr \text{ where } c \leq 1000, \ r \text{ is prime}, \text{ and } 2^{160} \leq r \leq 2^{512}.
	\end{equation}
\end{thm}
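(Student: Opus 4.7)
The plan is to reduce the statement to a finite, tractable computation over the 91 quartic CM fields of class number 1, execute the algorithm of Section~\ref{sec:dim-2-alg} on each, and verify that only the two listed examples satisfy condition~(\ref{cond:near-prime-and-crypto-size}). The fields themselves are enumerated in \cite{yamamura1994determination,louboutin1994determination}, so the search space is explicit.

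First I would convert~(\ref{cond:near-prime-and-crypto-size}) into an effective upper bound on $p$. The Weil bounds give $\#A(\FF_p) \leq (1 + \sqrt{p})^4$, so $(1 + \sqrt{p})^4 \leq 1000 \cdot 2^{512}$ forces $p \leq N$ for an explicit $N < 2^{260}$. Next, I would dispose of the non-simple and reducible cases: if the Frobenius characteristic polynomial $f$ of a super-isolated $A$ is not irreducible, then $A$ is $\FF_p$-isogenous either to a power of an elliptic curve or to a product $E_1 \times E_2$ of super-isolated elliptic curves, in which case $\#A(\FF_p)$ factors as a product of two integers each $\geq (\sqrt{p}-1)^2 \gtrsim 2^{80}$; this contradicts the near-prime condition $\#A(\FF_p) = cr$ with $c \leq 1000$ and $r \geq 2^{160}$ prime by an elementary divisibility argument.

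Thus we may assume $f$ is irreducible, and by Corollary~\ref{cor:surfaces-super-isolated-condition} it suffices to enumerate super-isolated Weil $p$-numbers $\pi \in \sO_K$ with $p \leq N$ for each of the 91 quartic CM fields $K$. For each $K$ I would execute the algorithm of Section~\ref{sec:dim-2-alg}. By Theorem~\ref{thm:algorithm-bound}, the algorithm needs to check only $O(\log N)$ values of $|i|$ in step~\ref{step:choose-i} to guarantee that every $\pi$ with $\px(\pi) \leq N$ is produced. For each enumerated $\pi$, I would then compute the characteristic polynomial $f$, evaluate $\#A(\FF_p) = f(1)$, and test directly whether it has the form $cr$ with $c \leq 1000$ and $r$ a prime of the required bit-length. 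The claim is that only the $\pi$'s of Examples~\ref{ex:iso-surface} and \ref{ex:iso-surface2} survive all three checks.

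The main obstacle lies in making the implicit constants in Theorem~\ref{thm:algorithm-bound} fully explicit and uniform across all 91 fields. The $O(\log N)$ bound rests on the assertion $|P_4(x_4)| = \Omega(x_4^4)$, which must be verified for each field and for each of the eight sign configurations giving analogues of the set $A$ from Lemma~\ref{lem:find-p4}; this is a finite but tedious symbolic computation and, together with a concrete $\Omega$-constant, is what determines the cutoff on $|i|$ in the actual search. A secondary technicality is the boundary case $a_4 = 0$ (and by symmetry $a_3 = 0$) excluded from Lemma~\ref{lem:bound-a4-from-below-by-i}: in that case Equations~(\ref{cond:f2=pm1})-(\ref{cond:p0=0}) degenerate into a norm equation in the real quadratic subfield $F$, whose only solutions come from units of $\sO_F$ and can be enumerated separately. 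With these pieces in place, direct computation for each of the 91 fields yields exactly the two examples.
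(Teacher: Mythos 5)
Your proposal follows essentially the same route as the paper's proof: bound $p$ (the paper gets $p \le 2^{261}$ from $(\sqrt{p}-1)^4/1000 \lesssim r \le 2^{512}$ --- note that for this step you need the \emph{lower} Weil bound $\#A(\FF_p) \ge (\sqrt{p}-1)^4$, not the upper bound you quoted), rule out reducible Frobenius polynomials via near-primality, invoke Corollary~\ref{cor:surfaces-super-isolated-condition}, and then run the algorithm of Section~\ref{sec:dim-2-alg} over the 91 class-number-one quartic CM fields with the implicit constants of Section~\ref{sec:dim-2-alg-runtime} made explicit. Your attention to the eight sign configurations and to the degenerate case $a_4 = 0$ matches, and slightly sharpens, the computational caveats the paper itself relegates to the appendix and the accompanying source code.
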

\begin{proof}[Sketch of proof]
	We will show that if $A$ is an abelian surface satisfying property~(\ref{cond:near-prime-and-crypto-size}), then the roots of the characteristic polynomial of the Frobenius endomorphism of $A$ are super-isolated Weil $p$-numbers with $p \leq 2^{261}$. The claim then follows by running the algorithm in Section~\ref{sec:dim-2-alg} long enough to find all super-isolated Weil $p$-numbers with $p \leq 2^{261}$.
	
	Let $A$ be an abelian surface over $\FF_p$ satisfying property~(\ref{cond:near-prime-and-crypto-size}). Recall from Section~\ref{sec:dim-2-background} that $\#A(\FF_p) = f(1)$, where $f$ is the characteristic polynomial of the Frobenius endomorphism of $A$. Using the well-known Hasse bound, it is not hard to show that property~(\ref{cond:near-prime-and-crypto-size}) implies that $A$ is simple, because a product of elliptic curves does not have near-prime order. Hence $f = h^e$ for some irreducible polynomial $h$, see Section~\ref{sec:dim-2-background}. But as $f(1)$ is near-prime, we must have $e=1$, i.e. $f$ is irreducible. So by Corollary~\ref{cor:surfaces-super-isolated-condition}, every root $\pi$ of $f$ is a super-isolated Weil $p$-number in the quartic CM field $K = \QQ(\pi)$ of class number $1$.
	
	Next we will show that property~\ref{cond:near-prime-and-crypto-size} implies that $p \leq 2^{261}$. This is similar to using the Hasse bound above. Since the roots of $f$ are Weil $p$-numbers, it follows that $f(x) = x^4 + ax^3 + bx^2 + pax + p^2$ with $|a| \leq 4\sqrt{p}$ and $|b| \leq 6p$. So
	\[
		r \geq \frac{p^2 - 4p^{3/2} - 6p - 4\sqrt{p} - 1}{1000}.
	\]
	A straightforward calculation shows that this inequality, when combined with the bound $r \geq 2^{512}$, implies that $p \leq 2^{261}$.
	
	The next part of the proof is computational. We used {\tt Sage} to compute the implicit constants in Section~\ref{sec:dim-2-alg-runtime} that are used to bound the number of steps the algorithm must take in order to enumerate all super-isolated Weil $p$-numbers with $p \leq 2^{261}$ (see Theorem~\ref{thm:algorithm-bound}). Some details for the case of $K = \QQ(\zeta_5)$ are given in Appendix~\ref{sec:details-for-zeta5}. Our results show that there are $282$ conjugacy classes of such Weil numbers. Only those given in Examples~\ref{ex:iso-surface} and \ref{ex:iso-surface2} satisfy the properties in the claim. The source code is available at \url{https://sites.math.washington.edu/~tscholl2/super-isolated}.
\end{proof}

\begin{rem}
	The bound $c \leq 1000$ used above is arbitrary. The smaller $c$ is the more efficient the cryptosystem will be. The three smallest values of $c$ were $31$, $521$, and $73399$.
\end{rem}

\begin{rem}
	Note that the surfaces in Examples~\ref{ex:iso-surface} and \ref{ex:iso-surface2} provide $113$ and $116$ bits of security respectively (i.e. half the bitlength of the largest prime dividing the order). Recent standards suggest using between $128$ and $256$ bits of security \cite{fips2013}. While $113$ bits should be fine in many cases, our result shows that we can increase the security without increasing the size of $p$ or $c$, which would reduce the efficiency.
\end{rem}

\begin{rem}
	After running the algorithm to search all super-isolated Weil $p$-numbers $\pi$ with $100 \leq p \leq 2^{10000}$, there was only one conjugacy class such that $\norm(\pi - 1)$ was prime. In that case, $p \approx 2^{740}$. This prime is too large to be useful in most practical applications.
\end{rem}

\appendix

\section{Details for $\QQ(\zeta_5)$}\label{sec:details-for-zeta5}

In this section, we provide a detailed example of the algorithm in Section~\ref{sec:dim-2-alg}, for the field $\QQ(\zeta_5)$. We also show how long the algorithm must run in order to enumerate all super-isolated Weil $p$-numbers with $p \leq 2^{261}$. We will use the notation from Section~\ref{sec:dim-2-alg} and the methods from Section~\ref{sec:dim-2-alg-runtime}.

Let $K = \QQ(\zeta_5)$.
Recall that step~\ref{step:choose-basis} of the algorithm chooses a basis $\{\alpha_1,\alpha_2,\alpha_3,\alpha_4\}$ for $\sO_K$ such that $\{\alpha_1,\alpha_2\}$ form a basis for $\sO_F$ where $F$ is the maximal real subfield of $K$. In this case, $F = \QQ(\sqrt{5})$. We choose
\[
	\alpha_1 = 1, \quad
	\alpha_2 = -\zeta_5^3 - \zeta_5^2 + 2, \quad
	\alpha_3 = -3\zeta_5^2 - 2\zeta_5^2 - 2, \quad
	\alpha_4 = -2\zeta_5^3 + 3\zeta_5^2 - \zeta_5 - 1.
\]
Let $\phi_1,\phi_2$ be the embeddings $K \hookrightarrow \CC$ defined by
\[
	\phi_1(\zeta_5) = e^{2\pi i/5} ,\quad
	\phi_2(\zeta_5) = e^{4\pi i/5}.
\]
For reference, $\phi_1(\alpha_2) = \frac{5 + \sqrt{5}}{2}$ and $\phi_2(\alpha_2) = \frac{5 - \sqrt{5}}{2}$.

Next we compute the polynomials defined in Equations~(\ref{eq:f1})-(\ref{eq:p0}):
\begin{align*}
	\fl &=
	2 x_{2} + 5 x_{3} - 2 x_{4}
	\\
	\fll &=
	x_{3}^{2} + 9 x_{3} x_{4} + 19 x_{4}^{2}
	\\
	\px &=
	x_{1}^{2} + 5 x_{1} x_{2} + \frac{15}{2} x_{2}^{2} - \frac{3}{2} x_{1} x_{3} + \frac{5}{2} x_{2} x_{3} + 9 x_{3}^{2} - 2 x_{1} x_{4} - \frac{15}{2} x_{2} x_{4} + \frac{3}{2} x_{3} x_{4} + \frac{37}{2} x_{4}^{2}
	\\
	\po &=
	x_{1} x_{2} + \frac{5}{2} x_{2}^{2} + \frac{5}{2} x_{1} x_{3} + \frac{11}{2} x_{2} x_{3} - 2 x_{3}^{2} -  x_{1} x_{4} - \frac{7}{2} x_{2} x_{4} - \frac{7}{2} x_{3} x_{4} - \frac{9}{2} x_{4}^{2}.
\end{align*}

The next step is to enumerate all solutions to the equation $\fll(x_3,x_4) = \pm 1$. We do this following the method laid out in step~\ref{step:a3a4}.

Write $\fll = ax_3^2 + bx_3x_4 + cx_4^2$, and then choose a generator $\gamma$ for the ideal $I = a\ZZ + ((b + \sqrt{\Delta_F})/2)\ZZ$. Because in this case $I = \sO_F$, we will choose $\gamma = 1$.
A fundamental unit for $\sO_F$ is
\[
	\epsilon = -\zeta_5^3 - \zeta_5^2 - 1.
\]

Now for each choice of sign and value of $i$ we compute $\sigma = \pm \epsilon^i\gamma$ and write
\[
	\sigma = a_3a + a_4\frac{b+\sqrt{\Delta_F}}{2}
\]
as in step~\ref{step:compute-a3a4}.

The remaining steps of the algorithm are straightforward, so we will skip ahead to compute the bound on $i$ in order to find all super-isolated Weil $p$-numbers with $p \leq 2^{261}$. We first compute the constants from Section~\ref{sec:dim-2-alg-runtime}.

Notice that $\fll(a_3,a_4) = \pm 1$ implies that
\begin{align*}
	a_3
	&= \frac{-ba_4 \pm \sqrt{(ba_4)^2 - 4a(ca_4^2 \pm 1)}}{2a}
	\\
	&= \frac{-ba_4 \pm |a_4|\sqrt{\Delta_F \pm 4a/a_4^2}}{2a}.
\end{align*}
Therefore we can bound $|a_3|$ from above by
\begin{align*}
	|a_3|
	&\leq \frac{|b| + \sqrt{\Delta_F + 4|a|}}{2|a|} |a_4|
	\\
	&= 6|a_4|
\end{align*}

Next we want to bound $|a_4|$ from below by an exponential function in $i$. Following the proof of Lemma~\ref{lem:bound-a4-from-below-by-i},
\begin{align*}
	\min(|\phi_1(\gamma)|,|\phi_2(\gamma|)\max(|\phi_1(\epsilon)|,|\phi_2(\epsilon)|)^{|i|}
	&\leq \max\left(|\phi_1(\gamma\epsilon^i)|,|\phi_2(\gamma\epsilon^i)|\right)
	\\
	&\leq |a_3||a| + |a_4|\frac{|b| + \sqrt{\Delta_F}}{2}
	\\
	&= |a_3| + \frac{9 + \sqrt{5}}{2}|a_4|
	\\
	&\leq 6|a_4| + \frac{9 + \sqrt{5}}{2}|a_4|
	\\
	&\leq 12|a_4|
\end{align*}
Since $\gamma = 1$ and $\max(|\phi_1(\epsilon)|,|\phi_2(\epsilon)|) = \frac{1 + \sqrt{5}}{2} \geq 3/2$, we have
\[
	|a_4| \geq \frac{1}{12}\left(\frac{3}{2}\right)^{|i|}.
\]

Next we want to bound $\px(a_1,a_2,a_3,a_4)$ from below in terms of $|a_4|$. We follow the same steps as in the proof of Lemma~\ref{lem:find-p4}. For simplicity, we will assume that $(a_1,a_2,a_3,a_4)$ is a solution to Equations~(\ref{eq:f1=1})-(\ref{eq:x3=x4+sqrt}). In our case, these equations are
\begin{align*}
	1 &= 2x_2 + 5x_3 - 2x_4
	\\
	0 &= x_1 + 5x_2^2 + 11x_2x_3 - 4x_3^2 - 7x_2x_4 - 7x_3x_4 - 9x_4^2
	\\
	x_3 &= \frac{-9 + \sqrt{5 + 4/x_4^2}}{2}x_4
\end{align*}
Note that integral solutions to these equations exist, for example, $x_1=115,x_2=-45,x_3=17,x_4=-3$. This solution comes from setting $i = 7$.

By solving each of these constraints in terms of $x_4$, we have
\begin{align*}
	x_1 &=
	\frac{5}{8} \, x_4^{2} + \frac{1}{8} \, {\left(5 \, x_4^{2} + 28 \, x_4\right)} \sqrt{5 + \frac{4}{x_4^2}} - 33 \, x_4 - 1
	\\
	x_2 &=
	-\frac{5}{4} \, x_4 \sqrt{5 + \frac{4}{x_4^2}} + \frac{49}{4} \, x_4 + \frac{1}{2}
	\\
	x_3 &=
	\frac{1}{2} \, x_4 \sqrt{5 + \frac{4}{x_4^2}} - \frac{9}{2} \, x_4
\end{align*}

Substituting these into the polynomial $\px(x_1,x_2,x_3,x_4)$ as in the proof of Lemma~\ref{lem:find-p4}, we have
\[
	P_4(x_4) =
	\frac{75}{32} \, x_4^{4} + \frac{55}{16} \, x_4^{2} + \frac{5}{32} \, {\left(5 \, x_4^{4} + 4 \, x_4^{2}\right)} \sqrt{5 + \frac{4}{x_4^2}} + 1
\]
In particular,
\[
	P_4(x_4) \geq \frac{75}{32}x_4^4
\]
for all $x_4 > 0$.

We saw above that for each integer $i$, any associated pair $(a_3,a_4)$ (as computed in step~\ref{step:a3a4} of the algorithm in Section~\ref{sec:dim-2-alg}) satisfies
\[
	|a_4| \geq \frac{1}{12}\left(\frac{3}{2}\right)^{|i|}.
\]
If the tuple happens to satisfy Equations~(\ref{eq:f1=1})-(\ref{eq:x3=x4+sqrt}), then
\[
	\px(a_1,a_2,a_3,a_4) \geq \frac{75}{32}\left(\frac{1}{12}\left(\frac{3}{2}\right)^{|i|}\right)^4.
\]
So in order to capture all super-isolated Weil $p$-numbers with $p \leq 2^{261}$ which satisfy the constraints above, we have to check all $i$ with $|i| \leq 118$.
The bound for other choices of signs in Equations~(\ref{eq:f1=1})-(\ref{eq:x3=x4+sqrt}) can be computed similarly.

\bibliographystyle{plain}
\bibliography{references}

\begin{thebibliography}{10}

\bibitem{bateman1962heuristic}
Paul~T. Bateman and Roger~A. Horn.
\newblock A heuristic asymptotic formula concerning the distribution of prime
  numbers.
\newblock {\em Math. Comp.}, 16:363--367, 1962.

\bibitem{bernstein2014kummer}
Daniel~J. Bernstein, Chitchanok Chuengsatiansup, Tanja Lange, and Peter
  Schwabe.
\newblock Kummer strikes back: new {DH} speed records.
\newblock In {\em Advances in cryptology---{ASIACRYPT} 2014. {P}art {I}},
  volume 8873 of {\em Lecture Notes in Comput. Sci.}, pages 317--337. Springer,
  Heidelberg, 2014.

\bibitem{cahen1997integer}
Paul-Jean Cahen and Jean-Luc Chabert.
\newblock {\em Integer-valued polynomials}, volume~48 of {\em Mathematical
  Surveys and Monographs}.
\newblock American Mathematical Society, Providence, RI, 1997.

\bibitem{cohen1993course}
Henri Cohen.
\newblock {\em A course in computational algebraic number theory}, volume 138
  of {\em Graduate Texts in Mathematics}.
\newblock Springer-Verlag, Berlin, 1993.

\bibitem{cohen2000advanced}
Henri Cohen.
\newblock {\em Advanced topics in computational number theory}, volume 193 of
  {\em Graduate Texts in Mathematics}.
\newblock Springer-Verlag, New York, 2000.

\bibitem{cohen2006handbook}
Henri Cohen, Gerhard Frey, Roberto Avanzi, Christophe Doche, Tanja Lange, Kim
  Nguyen, and Frederik Vercauteren, editors.
\newblock {\em Handbook of elliptic and hyperelliptic curve cryptography}.
\newblock Discrete Mathematics and its Applications (Boca Raton). Chapman \&
  Hall/CRC, Boca Raton, FL, 2006.

\bibitem{enge2002computing}
Andreas Enge.
\newblock Computing discrete logarithms in high-genus hyperelliptic {J}acobians
  in provably subexponential time.
\newblock {\em Math. Comp.}, 71(238):729--742, 2002.

\bibitem{gaudry2007double}
P.~Gaudry, E.~Thom\'e, N.~Th\'eriault, and C.~Diem.
\newblock A double large prime variation for small genus hyperelliptic index
  calculus.
\newblock {\em Math. Comp.}, 76(257):475--492, 2007.

\bibitem{gaudry2000algorithm}
Pierrick Gaudry.
\newblock An algorithm for solving the discrete log problem on hyperelliptic
  curves.
\newblock In {\em Advances in cryptology---{EUROCRYPT} 2000 ({B}ruges)}, volume
  1807 of {\em LNCS}, pages 19--34. Springer, Berlin, 2000.

\bibitem{honda1968isogeny}
Taira Honda.
\newblock Isogeny classes of abelian varieties over finite fields.
\newblock {\em J. Math. Soc. Japan}, 20:83--95, 1968.

\bibitem{koblitz2011elliptic}
Ann~Hibner Koblitz, Neal Koblitz, and Alfred Menezes.
\newblock Elliptic curve cryptography: the serpentine course of a paradigm
  shift.
\newblock {\em J. Number Theory}, 131(5):781--814, 2011.

\bibitem{koblitz1989hyperelliptic}
Neal Koblitz.
\newblock Hyperelliptic cryptosystems.
\newblock {\em J. Cryptology}, 1(3):139--150, 1989.

\bibitem{louboutin1994determination}
Stéphane Louboutin and Ryotaro Okazaki.
\newblock Determination of all non-normal quartic {CM}-fields and of all
  non-abelian normal octic {CM}-fields with class number one.
\newblock {\em Acta Arith.}, 67(1):47--62, 1994.

\bibitem{maisner2002abelian}
Daniel Maisner and Enric Nart.
\newblock Abelian surfaces over finite fields as {J}acobians.
\newblock {\em Experiment. Math.}, 11(3):321--337, 2002.
\newblock With an appendix by Everett W. Howe.

\bibitem{menezes2006cryptographic}
Alfred Menezes and Edlyn Teske.
\newblock Cryptographic implications of {H}ess' generalized {GHS} attack.
\newblock {\em Appl. Algebra Engrg. Comm. Comput.}, 16(6):439--460, 2006.

\bibitem{menezes1993reducing}
Alfred~J. Menezes, Tatsuaki Okamoto, and Scott~A. Vanstone.
\newblock Reducing elliptic curve logarithms to logarithms in a finite field.
\newblock {\em IEEE Trans. Inform. Theory}, 39(5):1639--1646, 1993.

\bibitem{menezes1997handbook}
Alfred~J. Menezes, Paul~C. van Oorschot, and Scott~A. Vanstone.
\newblock {\em Handbook of applied cryptography}.
\newblock CRC Press Series on Discrete Mathematics and its Applications. CRC
  Press, Boca Raton, FL, 1997.
\newblock With a foreword by Ronald L. Rivest.

\bibitem{miele2015efficient}
Andrea Miele and Arjen~K Lenstra.
\newblock Efficient ephemeral elliptic curve cryptographic keys.
\newblock In {\em Information Security: 18th International Conference, ISC
  2015, Trondheim, Norway, September 9-11, 2015, Proceedings}, volume 9290 of
  {\em LNCS}, pages 524--547. Springer International Publishing, 2015.

\bibitem{fips2013}
{National Institute of Standards and Technology}.
\newblock Digital {S}ignature {S}tandard ({DSS}).
\newblock Federal Information Processing Standards Publication 186-4, 2013.

\bibitem{neukrich1999algebraic}
J{\"u}rgen Neukirch.
\newblock {\em Algebraic number theory}, volume 322 of {\em Grundlehren der
  Mathematischen Wissenschaften [Fundamental Principles of Mathematical
  Sciences]}.
\newblock Springer-Verlag, Berlin, 1999.
\newblock Translated from the 1992 German original and with a note by Norbert
  Schappacher, With a foreword by G. Harder.

\bibitem{schoof1987nonsingular}
Ren{\'e} Schoof.
\newblock Nonsingular plane cubic curves over finite fields.
\newblock {\em J. Combin. Theory Ser. A}, 46(2):183--211, 1987.

\bibitem{silverman2009arithmetic}
Joseph~H. Silverman.
\newblock {\em The arithmetic of elliptic curves}, volume 106 of {\em Graduate
  Texts in Mathematics}.
\newblock Springer, Dordrecht, second edition, 2009.

\bibitem{smith2008isogenies}
Benjamin Smith.
\newblock Isogenies and the discrete logarithm problem in {J}acobians of genus
  3 hyperelliptic curves.
\newblock volume~22, pages 505--529. 2009.

\bibitem{stark1967complete}
H.~M. Stark.
\newblock A complete determination of the complex quadratic fields of
  class-number one.
\newblock {\em Michigan Math. J.}, 14:1--27, 1967.

\bibitem{effective1974stark}
H.~M. Stark.
\newblock Some effective cases of the {B}rauer-{S}iegel theorem.
\newblock {\em Invent. Math.}, 23:135--152, 1974.

\bibitem{sage}
{The Sage Developers}.
\newblock {\em SageMath, the Sage Mathematics Software System (Version 7.5)},
  2017.
\newblock {\tt http://www.sagemath.org}.

\bibitem{wenhan2012isolated}
Wenhan Wang.
\newblock {\em Isolated {C}urves for {H}yperelliptic {C}urve {C}ryptography}.
\newblock PhD thesis, University of Washington, 2012.

\bibitem{waterhouse1971abelian}
W.~C. Waterhouse and J.~S. Milne.
\newblock Abelian varieties over finite fields.
\newblock In {\em 1969 {N}umber {T}heory {I}nstitute ({P}roc. {S}ympos. {P}ure
  {M}ath., {V}ol. {XX}, {S}tate {U}niv. {N}ew {Y}ork, {S}tony {B}rook,
  {N}.{Y}., 1969)}, pages 53--64. Amer. Math. Soc., Providence, R.I., 1971.

\bibitem{waterhouse1969abelian}
William~C. Waterhouse.
\newblock Abelian varieties over finite fields.
\newblock volume~2, pages 521--560, 1969.

\bibitem{yamamura1994determination}
Ken Yamamura.
\newblock The determination of the imaginary abelian number fields with class
  number one.
\newblock {\em Math. Comp.}, 62(206):899--921, 1994.

\end{thebibliography}
	
\end{document}